\newcommand{\reduline}[1]{}
\newcommand{\rcolored}[1]{}
 \newtheorem{thm}{Theorem}[section]
\newtheorem{ex}[thm]{Example}
 \newtheorem{cor}[thm]{Corollary}
 \newtheorem{lem}[thm]{Lemma}
 \newtheorem{prop}[thm]{Proposition}
 \theoremstyle{definition}
 \newtheorem{defn}[thm]{Definition}
 \theoremstyle{remark}
 \newtheorem{rem}[thm]{Remark}
 \theoremstyle{definition}
 \newtheorem{exm}{Example}[section]
 \def\Blem{\begin{lem}}
 \def\Elem{\end{lem}}
 \newcommand{\uccomment}[1]{}
 \def\Bpr{\begin{prop}}
 \def\Epr{\end{prop}}
 \def\Bp{\begin{proof}}
 \def\Ep{\end{proof}}
 \def\Bex{\begin{exm}}
 \def\Eex{\end{exm}}
 \def\Bcor{\begin{cor}}
 \def\Ecor{\end{cor}}
 \def\Br{\begin{rem}}
 \def\Er{\end{rem}}
 \def\Bthm{\begin{thm}}
 \def\Ethm{\end{thm}}
 \def\Bd{\begin{defn}}
 \def\Ed{\end{defn}}
 \def\Beq{\begin{equation}}
 \def\Eeq{\end{equation}}
 \newcommand{\To}{\longrightarrow}
\newcommand{\tx}{\tilde{X}}
\newcommand{\ty}{\tilde{Y}}
 \def\ZZ{\mathbb{Z}}
  \def\NN{\mathbb{N}}
 \def\RR{\mathbb{R}}
  \def\he{\mathbb{HE}}
\journal{ }
\begin{document}

\begin{frontmatter}

\title{  On Semicovering, Subsemicovering and Subcovering Maps}
\author[]{ Majid Kowkabi }
\ead{m.kowkabi@stu.um.ac.ir}
\author[]{Behrooz Mashayekhy \corref{cor1}}
\ead{bmashf@um.ac.ir}
\author[]{Hamid Torabi}
\ead{h.torabi@ferdowsi.um.ac.ir}

\address{Department of Pure Mathematics, Center of Excellence in Analysis on Algebraic Structures, Ferdowsi University of Mashhad,
P.O.Box 1159-91775, Mashhad, Iran }
%\address[fumadep1]{Department of Industrial Engineering, Ferdowsi University of Mashhad, Azadi Square,Mashhad 91775-1159, Iran }
\cortext[cor1]{Corresponding author}

%\author{Authors' identities suppressed: Blind refereeing copy}

\begin{abstract}
In this paper, by reviewing the concept of subcovering and semicovering maps, we extend the notion of subcovering map to subsemicovering map. We present some necessary or sufficient conditions for a local homeomorphism to be a subsemicovering map. Moreover, we investigate the relationship between these conditions by some examples. Finally, we give a necessary and sufficient condition for a subsemicovering map to be semicovering.
\end{abstract}

\begin{keyword}
 local homeomorphism, fundamental group, covering map, semicovering map, subcovering map, subsemicovering map.
%% keywords here, in the form: keyword \sep keyword

%% MSC codes here, in the form: \MSC code \sep code
\MSC[2010] 57M10 \sep 57M12\sep 57M05.
\end{keyword}

\end{frontmatter}

\section{Introduction and Motivation}

Steinberg \cite[Section 4.2]{sting} defined a map $p: \tx \to X $ of locally path connected spaces a {\it subcovering map} if there exists a covering map $p': \ty \to X $ and a topological embedding $i: \tx \to \ty $ such that $ p' \circ i =p$. He presented a necessary and sufficient condition for a local homeomorphism $p: \tx \to X $ to be subcovering.  More precisely, he proved that a continuous map $p :\tx \to X $ of locally path connected and semilocally simply connected spaces is subcovering if and only if $p:\tx \to X$ is a local homeomorphism and any path $f$ in $\tx$ with $p \circ f$ null homotopic (in $X$) is closed, i.e, $f(0) = f(1)$ (see \cite[Theorem 4.6]{sting}).

Brazas \cite[Definition 3.1]{B1} extended the concept of covering map to semicovering map. A {\it semicovering map} is a local homeomorphism with continuous lifting of paths and homotopies. Klevdal in \cite[Definition 7]{Klevdal} simplified the notion of semicovering map as a local homeomorphism with unique path lifting and path lifting properties.

In this paper, we extend the notion of subcovering map to subsemicovering map. We call a local homeomorphism $p:\tx \to X $ a {\it subsemicovering map} if it can be extended to a semicovering map $q: \tilde{Y} \to X $, i.e, there exists a topological embedding $\varphi: \tx \rightarrow \tilde{Y} $ such that $q \circ \varphi=p$.
Moreover, if $ p_*(\pi_1(\tilde{X}, \tilde{x}_0)) =  q_*(\pi_1(\tilde{Y}, \tilde{y}_0))$, then we say that $p$ is a {\it full subsemicovering map}, when $q$ is a semicovering map, and a {\it full subcovering map} when $q$ is a covering map. Since any covering map is a semicovering map, every subcovering map is a subsemicovering map.
Note that there exists a subsemicovering map which is not a full subsemicovering map (see Example \ref{ex111}).

In Section 2, among reviewing the concept of local homeomorphism, path lifting property, unique path lifting property and semicovering map, we mention some needed results on these notions such as path homotopy theorem for local homeomorphism and lifting criterion theorem for semicovering map. Also, we recall from \cite[Lemma 3.1, 3.2]{mt2} a concept of defective lifting for local homeomorphisms which is call incomplete lifting.

In Section 3, we obtain some necessary or sufficient conditions for a local homeomorphism to be a subsemicovering map. First, by introducing a strong version of the unique path lifting property which we call it strong UPLP, we show that it is a necessary condition for a local homeomorphism to be a subsemicovering map. Also, we prove that if a local homeomorphism $p:\tx \to X $ is a subsemicovering map, then any path $f$ in $\tx$ with $p \circ f$ null homotopic (in $X$) is closed, i.e, $f(0) = f(1)$. Moreover, we show that the latter condition on a local homeomorphism $p :(\tx, \tilde{x}_0) \to (X, x_0) $ is a sufficient condition for $p$ to be subsemicovering provided that $p_* (\pi_1 (\tx, \tilde{x}_0))$ is an open subgroup of the quasitopological fundamental group $\pi_1^{qtop}(X, x_0)$ (see \cite{3} for the notion of the quasitopological fundamental group). Second, we investigate the relationship between these necessary or sufficient conditions by some examples. For instance, we show that openness of $p_* (\pi_1 (\tx, \tilde{x}_0))$ is not necessary for a local homeomorphism $p$ to be subsemicovering. Moreover, we give some examples to show that none of the two necessary conditions for a local homeomorphism to be subsemicovering are sufficient and also the sufficient condition is not necessary. Also, we show that a continuous map $p :\tx \to X $ of locally path connected spaces is full subsemicovering if and only if $p:(\tx, \tilde{x}_0) \to (X, x_0)$ is a local homeomorphism and any path $f$ in $\tx$ with $p \circ f$ null homotopic (in $X$) is closed and $p_* (\pi_1 (\tx, \tilde{x}_0))$ is an open subgroup of $\pi_1^{qtop}(X, x_0)$. Furthermore, we prove that a continuous map $p :\tx \to X $ of locally path connected spaces is full subcovering if and only if $p:(\tx, \tilde{x}_0) \to (X, x_0)$ is a local homeomorphism and any path $f$ in $\tx$ with $p \circ f$ null homotopic (in $X$) is closed and $p_* (\pi_1 (\tx, \tilde{x}_0))$ contains an open normal subgroup of $\pi_1^{qtop}(X, x_0)$. Finally, by extending the notions strong homotopy and the fundamental inverse category and monoid introduced by Steinberg \cite{sting} to semicovering maps, we give a necessary and sufficient condition for a subsemicovering map to be semicovering.

\section{Notations and Preliminaries}

In this paper, all maps $f : X \to Y$ between topological spaces
$X$ and $Y$ are continuous. We recall that a continuous map $p:\tilde{X} \To X$
is called a {\it local homeomorphism} if for every point $\tilde{x} \in \tilde{X}$,
there exists an open neighborhood $\tilde{W} $ of $\tilde{x}$ such that $p(\tilde{W})$ is open in $X$ and the restriction map  $p|_{ \tilde{W}}: \tilde{W} \To p( \tilde{W})$ is a homeomorphism. In this paper, we denote a local homeomorphism $p:\tilde{X} \To X$ by $( \tx, p)$ and assume that $\tx$ is path connected and locally path connected.

Assume that $X$ and $\tx$ are topological spaces and $p:\tilde{X} \To X$ is a continuous map.
Let $f:(Y, y_0) \to (X, x_0)$ be a continuous map and $\tilde{x}_0\in p^{-1}(x_0)$. If there exists a continuous map $\tilde{f}:(Y, y_0) \to (\tx, \tilde{x}_0)$ such that $p \circ \tilde{f} =f $, then  $\tilde{f}$ is called a {\it lifting} of $f$.
The map $p$ has {\it path lifting property} (PLP for short) if for every path $f$ in $X$, there exists a lifting $\tilde{f}:(I, 0) \to (\tx, \tilde{x}_0)$ of $f $.
Also, the map $p$ has {\it unique path lifting property} (UPLP for short)  if for every path $f$ in $X$, there is at most one lifting $\tilde{f}:(I, 0) \to (\tx, \tilde{x}_0)$ of $f $ (see \cite{s}).

Brazas \cite[Definition 3.1]{B1} generalized the concept of covering map by the phrase {\it {``A semicovering map is a local homeomorphism with continuous lifting of paths and homotopies"}}. Note that a map $p: Y \to X $ has {\it {continuous lifting of paths}} if $ \rho_p: (\rho Y )_y \to (\rho X)_{p(y)}$ defined by $\rho_p(\alpha) = p \circ \alpha $ is a homeomorphism, for all $y \in Y,$ where $(\rho Y)_y = \{\alpha: I = [0, 1] \to Y | \alpha(0) = y\}$. Also, a map $p: Y \to X $ {\it {has continuous lifting of homotopies}} if $\Phi_p: (\Phi Y )_y \to (\Phi X)_{p(y)}$
defined by $\Phi_p(\phi) = p \circ \phi$ is a homeomorphism, for all $y \in Y$, where elements of $(\Phi Y)_y$ are endpoint preserving homotopies of paths starting at $y$. He also simplified the definition of semicovering maps by showing that having continuous lifting of paths implies having continuous lifting of homotopies ( see \cite[Remark 2.5]{B2}).

The following theorem can be found in \cite[Lemma 2.1]{Klevdal} and \cite[Theorem 3.1]{mt}.
\begin{thm} \label{Homotopy}(Local Homeomorphism Homotopy Theorem for Paths).
\\Let $(\tx, p)$ be a local homeomorphism of \ $X$ with UPLP and PLP. Consider the diagram of continuous maps\\
\[
\xymatrix{
 I \ar[d]^{j} \ar[r]^{\tilde f} & (\tx, \tilde{x}_0) \ar[d]_p \\
 I\times I \ar[r]^F \ar@{-->}[ur]^{\tilde F} & (X, x_0),}
\]
where $j(t)= (t, 0)$ for all $t \in I$. Then there exists a unique continuous map $ \tilde{F}:I\times I \to \tx$ making the diagram commute.
\end{thm}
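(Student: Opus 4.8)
The plan is to construct $\tilde F$ by lifting ``vertical'' paths, to read off uniqueness immediately from UPLP, and then to spend essentially all the effort on continuity, which is the only non-formal part. \emph{Construction and uniqueness.} For each $t\in I$ let $\gamma_t\colon I\to X$ be the path $\gamma_t(s)=F(t,s)$; by PLP it has a lift starting at $\tilde f(t)$, and by UPLP that lift $\tilde\gamma_t\colon I\to\tx$ is unique. I would set $\tilde F(t,s):=\tilde\gamma_t(s)$. Then $p\circ\tilde F=F$ by construction and $\tilde F(t,0)=\tilde\gamma_t(0)=\tilde f(t)$, so $\tilde F\circ j=\tilde f$ and the diagram commutes. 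For uniqueness, if $\tilde F'$ is any continuous lift of $F$ with $\tilde F'\circ j=\tilde f$, then for each fixed $t$ the path $s\mapsto\tilde F'(t,s)$ lifts $\gamma_t$ and starts at $\tilde f(t)$, hence equals $\tilde\gamma_t$ by UPLP; thus $\tilde F'=\tilde F$. Everything therefore reduces to showing that the pointwise-defined $\tilde F$ is continuous.

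\emph{Localizing along a vertical segment.} Fix $t_0\in I$. Since $p$ is a local homeomorphism, every point of the compact set $\tilde\gamma_{t_0}(I)$ lies in an open set on which $p$ restricts to a homeomorphism onto an open subset of $X$; using continuity of $\tilde\gamma_{t_0}$ I would choose a partition $0=s_0<\cdots<s_n=1$ and open sets $\tilde W_1,\dots,\tilde W_n\subseteq\tx$ with $p|_{\tilde W_k}\colon\tilde W_k\to U_k:=p(\tilde W_k)$ a homeomorphism and $\tilde\gamma_{t_0}([s_{k-1},s_k])\subseteq\tilde W_k$. Applying the tube lemma to $F$ on each slab $I\times[s_{k-1},s_k]$, together with continuity of $\tilde f$ at $t_0$, yields a $\delta>0$ with $F([t_0-\delta,t_0+\delta]\times[s_{k-1},s_k])\subseteq U_k$ for all $k$ and $\tilde f((t_0-\delta,t_0+\delta))\subseteq\tilde W_1$ (all understood intersected with $I$).

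\emph{The inductive core and conclusion.} I would then prove by induction on $k$ that, after finitely many further shrinkings of $\delta$, for every $t$ with $|t-t_0|<\delta$ one has $\tilde\gamma_t|_{[s_{k-1},s_k]}=(p|_{\tilde W_k})^{-1}\circ F(t,\cdot)$. Indeed, $g:=(p|_{\tilde W_k})^{-1}\circ F(t,\cdot)$ is a continuous lift of $\gamma_t|_{[s_{k-1},s_k]}$, so if its initial value equals $\tilde\gamma_t(s_{k-1})$ then UPLP forces $g=\tilde\gamma_t|_{[s_{k-1},s_k]}$. For $k=1$ the initial-value condition is precisely $\tilde f(t)\in\tilde W_1$. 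For the step, the inductive hypothesis at $k-1$ gives $\tilde\gamma_t(s_{k-1})=(p|_{\tilde W_{k-1}})^{-1}(F(t,s_{k-1}))\to\tilde\gamma_{t_0}(s_{k-1})$ as $t\to t_0$, and $\tilde\gamma_{t_0}(s_{k-1})\in\tilde W_{k-1}\cap\tilde W_k$; hence after shrinking $\delta$ we get $\tilde\gamma_t(s_{k-1})\in\tilde W_k$, so it is the unique point of $\tilde W_k$ over $F(t,s_{k-1})$, namely $g(s_{k-1})$. Granting this, $\tilde F$ agrees with the continuous map $(p|_{\tilde W_k})^{-1}\circ F$ on each closed rectangle $([t_0-\delta,t_0+\delta]\times[s_{k-1},s_k])\cap(I\times I)$; the finitely many pieces match along the segments $s=s_k$, so the pasting lemma gives continuity of $\tilde F$ on a neighborhood of $\{t_0\}\times I$, hence at every point of it. Since $t_0$ was arbitrary, $\tilde F$ is continuous.

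\emph{Main obstacle.} The construction and the uniqueness are formal consequences of PLP and UPLP; the real work is the continuity argument, and its delicate point is the bookkeeping of the finitely many shrinkings of $\delta$, so that unique path lifting can be invoked rectangle by rectangle and control of $\tilde\gamma_t$ is transported from the bottom edge upward while keeping a single $\delta$ that is uniform in $s$.
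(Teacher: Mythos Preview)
The paper does not supply its own proof of this theorem; it simply records the statement and attributes the result to \cite[Lemma 2.1]{Klevdal} and \cite[Theorem 3.1]{mt}. So there is no in-paper argument to compare against.

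Your argument is correct and is essentially the standard proof one finds in those references (and in the classical covering-space version). The construction via PLP, the uniqueness via UPLP applied columnwise, and the continuity proof by slicing the square into horizontal slabs over which $p$ has local inverses, together with the tube lemma and an induction up the slabs, are exactly the expected ingredients. Two small points worth tightening in a final write-up: (i) UPLP as stated in the paper is for paths $I\to X$, so when you invoke it on a subinterval $[s_{k-1},s_k]$ you are implicitly reparametrizing --- harmless, but worth a word; (ii) the ``finitely many further shrinkings of $\delta$'' should be done up front by taking the minimum over $k=1,\dots,n$, so that a single $\delta$ works for the whole column, which is what you need for the pasting-lemma conclusion. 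Neither of these is a gap; your sketch already anticipates them.
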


The following corollary is a consequence of the above theorem.
\begin{cor}\label{cor homotopy}
Let $p:\tilde{X}\to X $ be a local homeomorphism with UPLP and PLP. Let $x_0, x_1 \in X $ and  $f, g: I \to X$ be paths such that $f(0)=g(0)=x_0 $, $ f(1)=g(1)=x_1$ and  $\tilde{x}_0 \in p^{-1}(x_0)$. If $F:f \simeq g $ rel $\dot{I}$ and $\tilde{f}, \tilde{g}$ are the lifting of $f$ and $g$, respectively, with $\tilde{f}(0)= \tilde{x}_0= \tilde{g}(0)$, then $\tilde{F}: \tilde{f} \simeq \tilde{g}$ rel $\dot{I}$.
\end{cor}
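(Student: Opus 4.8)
The plan is to feed the homotopy $F$ into Theorem \ref{Homotopy} and then identify the four edges of the resulting lifted square by repeated use of the unique path lifting property. First I would fix notation: since $F : f \simeq g$ rel $\dot{I}$, we have $F(t,0) = f(t)$, $F(t,1) = g(t)$, $F(0,s) = x_0$ and $F(1,s) = x_1$ for all $t,s \in I$. Applying Theorem \ref{Homotopy} with the bottom-edge lift $\tilde f$ (recall $j(t) = (t,0)$ and $\tilde f(0) = \tilde x_0$), we obtain a unique continuous map $\tilde F : I \times I \to \tx$ with $p \circ \tilde F = F$ and $\tilde F(t,0) = \tilde f(t)$ for all $t \in I$.

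Next I would analyze the two vertical edges. The map $s \mapsto \tilde F(0,s)$ is continuous, hence a path in $\tx$, and it is a lift of the constant path $s \mapsto F(0,s) \equiv x_0$ starting at $\tilde F(0,0) = \tilde f(0) = \tilde x_0$. Since the constant path at $\tilde x_0$ is also such a lift, UPLP forces $\tilde F(0,s) = \tilde x_0$ for all $s \in I$. An identical argument applied to the right edge $s \mapsto \tilde F(1,s)$, which is a lift of the constant path at $x_1$ starting at $\tilde F(1,0) = \tilde f(1)$, gives $\tilde F(1,s) = \tilde f(1)$ for all $s \in I$. Thus $\tilde F$ is stationary on $\dot{I}$.

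Finally I would identify the top edge. The map $t \mapsto \tilde F(t,1)$ is a path in $\tx$ lifting $t \mapsto F(t,1) = g(t)$, and by the previous paragraph it begins at $\tilde F(0,1) = \tilde x_0$. Since $\tilde g$ is, by hypothesis, the lift of $g$ with $\tilde g(0) = \tilde x_0$, UPLP yields $\tilde F(t,1) = \tilde g(t)$ for all $t \in I$. Combining the three observations, $\tilde F : \tilde f \simeq \tilde g$ rel $\dot{I}$, which is the desired conclusion.

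I do not expect a genuine obstacle here: the only points requiring care are that each edge restriction is indeed a \emph{path} to which UPLP applies, which is immediate from the continuity of $\tilde F$, and that the hypotheses needed to invoke Theorem \ref{Homotopy} (namely UPLP together with PLP) are exactly those assumed in the statement. It is also worth remarking that PLP guarantees the existence of the lifts $\tilde f$ and $\tilde g$ in the first place, so the corollary is not vacuous.
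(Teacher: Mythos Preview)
Your proof is correct and is exactly the standard derivation the paper has in mind: the paper states this corollary immediately after Theorem~\ref{Homotopy} and simply notes that it is ``a consequence of the above theorem'' without spelling out the details. Your argument---lift $F$ via Theorem~\ref{Homotopy} and then use UPLP on each edge of the square to identify $\tilde F(\cdot,1)$ with $\tilde g$ and to force the vertical edges to be constant---is precisely how one unpacks that consequence.
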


The following theorem can be found in \cite[Corollary 2.6 and Proposition 6.2]{B1}.
\begin{thm}\label{criterion}(Lifting Criterion Theorem for Semicovering Maps).
\\If Y is connected and locally path connected, $f:(Y, y_0) \to (X, x_0)$ is continuous and $p: \tx \to X$ is a semicovering map where $\tx$ is path connected, then there exists a unique $ \tilde{f}:(Y, y_0) \to (\tx, \tilde{x}_0)$ such that $p \circ \tilde{f} = f$ if and only if $ f_*(\pi_1(Y, y_0))\subset  p_*(\pi_1(\tilde{X}, \tilde{x}_0))$.
\end{thm}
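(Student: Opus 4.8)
The plan is to prove the two implications separately: the forward one is purely formal, while the reverse one is the classical construction of a lift, adapted to use the homotopy-lifting results already established for local homeomorphisms with PLP and UPLP. For \emph{necessity}, if a lift $\tilde f$ exists then $p\circ\tilde f=f$ gives $f_*=p_*\circ\tilde f_*$ on fundamental groups, so $f_*(\pi_1(Y,y_0))=p_*\bigl(\tilde f_*(\pi_1(Y,y_0))\bigr)\subseteq p_*(\pi_1(\tx,\tilde x_0))$, and nothing further is needed.

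For \emph{sufficiency}, I would first record that a semicovering map, being a local homeomorphism whose path-lifting map $\rho_p$ is a homeomorphism, has both PLP and UPLP, so Theorem \ref{Homotopy} and Corollary \ref{cor homotopy} apply to $p$. Since $Y$ is connected and locally path connected it is path connected, so for each $y\in Y$ I choose a path $\gamma$ in $Y$ from $y_0$ to $y$, take the unique lift $\widetilde{f\gamma}$ of $f\circ\gamma$ with $\widetilde{f\gamma}(0)=\tilde x_0$, and set $\tilde f(y):=\widetilde{f\gamma}(1)$. The heart of the proof is that this is independent of $\gamma$: if $\gamma'$ is another such path, then $\gamma*\overline{\gamma'}$ is a loop at $y_0$, so the hypothesis yields a loop $\tilde\alpha$ at $\tilde x_0$ in $\tx$ with $p\circ\tilde\alpha\simeq(f\circ\gamma)*\overline{(f\circ\gamma')}$ rel $\dot I$; applying Corollary \ref{cor homotopy} to this homotopy, with both lifts starting at $\tilde x_0$, shows that the lift of $(f\circ\gamma)*\overline{(f\circ\gamma')}$ starting at $\tilde x_0$ ends where $\tilde\alpha$ does, namely at $\tilde x_0$, hence is a loop; splitting this lift at its midpoint and invoking UPLP then forces $\widetilde{f\gamma}(1)=\widetilde{f\gamma'}(1)$. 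Taking $\gamma$ constant also gives $\tilde f(y_0)=\tilde x_0$.

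It remains to check continuity, the relation $p\circ\tilde f=f$, and uniqueness. For continuity at $y\in Y$, choose an open $\tilde W\ni\tilde f(y)$ with $p|_{\tilde W}\colon\tilde W\to p(\tilde W)$ a homeomorphism onto an open subset of $X$, then a path-connected open neighborhood $V$ of $y$ with $f(V)\subseteq p(\tilde W)$; fixing a path from $y_0$ to $y$ and concatenating it with paths lying in $V$, one sees that $\tilde f|_V=(p|_{\tilde W})^{-1}\circ f|_V$, which is continuous. The identity $p\circ\tilde f=f$ is immediate from the construction, and uniqueness follows from UPLP, since any two lifts sending $y_0$ to $\tilde x_0$ must agree along every path out of $y_0$ and hence everywhere on the path-connected space $Y$. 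The only genuine obstacle is the well-definedness of $\tilde f$, which is exactly the point where the subgroup hypothesis $f_*(\pi_1(Y,y_0))\subseteq p_*(\pi_1(\tx,\tilde x_0))$ and Corollary \ref{cor homotopy} are used; everything else is routine once PLP and UPLP for semicoverings have been noted.
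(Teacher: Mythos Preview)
Your argument is correct and follows the classical proof of the lifting criterion for covering spaces, with the key observation that a semicovering map has PLP and UPLP so that Theorem~\ref{Homotopy} and Corollary~\ref{cor homotopy} apply. Note, however, that the paper does not give its own proof of this statement: it simply records the theorem as a known result, citing \cite[Corollary 2.6 and Proposition 6.2]{B1}, so there is no in-paper argument to compare against. Your write-up supplies exactly the self-contained proof one would expect, and each step (well-definedness via the subgroup hypothesis and Corollary~\ref{cor homotopy}, continuity via the local-homeomorphism chart, uniqueness via UPLP) is handled correctly.
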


The following theorem can be concluded from \cite[Definition 7, Corollary 2.1]{Klevdal}.
\begin{thm} \label{semicovering map}
A map $p:\tilde{X} \To X$ is a semicovering map if and only if it is a local homeomorphism with UPLP and PLP.
\end{thm}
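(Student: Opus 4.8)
The plan is to read ``semicovering map'' in Brazas' sense recalled above: a local homeomorphism $p$ for which $\rho_p:(\rho\tx)_{\tilde x}\to(\rho X)_{p(\tilde x)}$, $\alpha\mapsto p\circ\alpha$, is a homeomorphism for every $\tilde x\in\tx$ (continuous lifting of homotopies then being automatic by \cite[Remark 2.5]{B2}), and to prove the two implications separately.

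For ``semicovering $\Rightarrow$ local homeomorphism with UPLP and PLP'' there is essentially nothing to do beyond unpacking definitions. Such a $p$ is a local homeomorphism by definition, and for a fixed path $f$ in $X$ and a fixed $\tilde x_0\in p^{-1}(f(0))$ the map $\rho_p$ at $y=\tilde x_0$ is a bijection: surjectivity produces an $\alpha\in(\rho\tx)_{\tilde x_0}$ with $p\circ\alpha=f$, i.e.\ a lift of $f$ starting at $\tilde x_0$ (PLP), and injectivity forces two such lifts to coincide (UPLP).

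The converse is the real content. Assume $p$ is a local homeomorphism with PLP and UPLP; fix $\tilde x\in\tx$ and put $x=p(\tilde x)$. Continuity of $\rho_p$ is the standard fact that post-composition with a continuous map is continuous for the compact--open topology, and PLP/UPLP give surjectivity/injectivity of $\rho_p$ exactly as above, so the whole matter reduces to continuity of $\rho_p^{-1}$, i.e.\ to continuous dependence of the unique lift $\tilde f$ of $f\in(\rho X)_x$ (the one with $\tilde f(0)=\tilde x$) on $f$. To prove this at a point $f_0\in(\rho X)_x$ with lift $\tilde f_0$, I would start from a basic open neighbourhood of $\tilde f_0$, a finite intersection of subbasic sets $\{\gamma:\gamma(K_j)\subseteq O_j\}$ with $K_j\subseteq I$ compact, $O_j\subseteq\tx$ open and $\tilde f_0(K_j)\subseteq O_j$. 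Using the local-homeomorphism charts around the points of the compact arc $\tilde f_0(I)$ together with a Lebesgue-number argument on $I$, I would produce a partition $0=t_0<\dots<t_n=1$ and open sets $\tilde W_1,\dots,\tilde W_n$ in $\tx$ with $p|_{\tilde W_i}:\tilde W_i\to W_i:=p(\tilde W_i)$ a homeomorphism onto an open set, such that $\tilde f_0([t_{i-1},t_i])\subseteq\tilde W_i$ for every $i$ and, moreover, $\tilde W_i\subseteq O_j$ whenever $[t_{i-1},t_i]\cap K_j\neq\emptyset$. Setting $h_i=(p|_{\tilde W_i})^{-1}$ and $V_i=\tilde W_i\cap\tilde W_{i+1}$ (an open neighbourhood of $\tilde f_0(t_i)$ with $p(V_i)$ open in $X$), I would then take as the required neighbourhood of $f_0$ the open set $M$ of all paths $g$ with $g([t_{i-1},t_i])\subseteq W_i$ for all $i$ and $g(t_i)\in p(V_i)$ for $1\le i\le n-1$; clearly $f_0\in M$. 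For $g\in M$ one shows inductively on $i$ that $\tilde g|_{[t_{i-1},t_i]}=h_i\circ g|_{[t_{i-1},t_i]}$: at each step $h_i\circ g|_{[t_{i-1},t_i]}$ is a lift of $g|_{[t_{i-1},t_i]}$ starting at $\tilde g(t_{i-1})$ (the condition $g(t_{i-1})\in p(V_{i-1})$ is what forces $\tilde g(t_{i-1})\in\tilde W_i$, so that $h_i(g(t_{i-1}))=\tilde g(t_{i-1})$), and by UPLP it must coincide with $\tilde g|_{[t_{i-1},t_i]}$. Hence $\tilde g([t_{i-1},t_i])\subseteq\tilde W_i$ for all $i$, and since $\tilde W_i\subseteq O_j$ whenever $[t_{i-1},t_i]$ meets $K_j$, we get $\tilde g(K_j)\subseteq O_j$ for every $j$; thus $\rho_p^{-1}(M)$ lies inside the prescribed neighbourhood of $\tilde f_0$. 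This establishes continuity of $\rho_p^{-1}$, so $\rho_p$ is a homeomorphism; as $\tilde x$ was arbitrary, $p$ has continuous lifting of paths (hence of homotopies), i.e.\ $p$ is a semicovering map.

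The main obstacle is exactly this continuity of $\rho_p^{-1}$: one must choose the partition and the charts finely enough that a single open set $M$ downstairs pins down the lift on every subinterval at once, and must keep the bookkeeping straight between the compact--open subbasic data $(K_j,O_j)$ upstairs and the Lebesgue-number partition downstairs. A minor point is that UPLP is stated for paths on $[0,1]$, so applying it to $g|_{[t_{i-1},t_i]}$ requires a harmless linear reparametrisation. Everything else --- continuity of $\rho_p$, reading off PLP/UPLP from bijectivity, and passing from continuous path-lifting to continuous homotopy-lifting via \cite[Remark 2.5]{B2} --- is routine.
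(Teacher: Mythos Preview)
Your argument is sound: the forward implication is immediate, and for the converse your Lebesgue-number/chart construction does produce an open $M\subseteq(\rho X)_x$ on which $\rho_p^{-1}$ lands in the prescribed basic neighbourhood of $\tilde f_0$. The one step that deserves an extra line of justification is the refinement ``$\tilde W_i\subseteq O_j$ whenever $[t_{i-1},t_i]\cap K_j\neq\emptyset$'': to arrange this, for each $t\in I$ shrink the chart at $\tilde f_0(t)$ into $\bigcap_{j:\,t\in K_j}O_j$ and intersect the pulled-back neighbourhood of $t$ with small intervals disjoint from each $K_j$ not containing $t$, before taking the Lebesgue number. With that tweak the inductive identification $\tilde g|_{[t_{i-1},t_i]}=h_i\circ g|_{[t_{i-1},t_i]}$ goes through exactly as you wrote (your observation that $g(t_{i-1})\in p(V_{i-1})$ forces $h_{i-1}(g(t_{i-1}))\in V_{i-1}\subseteq\tilde W_i$, by injectivity of $p|_{\tilde W_{i-1}}$, is the key point and is correct).

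However, the paper does \emph{not} prove this theorem at all: it simply records it as a consequence of \cite[Definition~7, Corollary~2.1]{Klevdal} and moves on. So your approach is not the paper's approach---the paper outsources the result entirely, while you supply a self-contained compact--open topology argument. What your version buys is independence from the cited source and a concrete mechanism (the chain of local inverses $h_i$) that makes the continuous dependence of lifts completely explicit; what the paper's citation buys is brevity, since this equivalence is used only as a black box in the rest of the article.
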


Note that there exists a local homeomorphism without UPLP and PLP and so it is not a semicovering map .
\begin{ex}\label{ex} (\cite[Example 2.4]{mt2}).
Let $\tx = ( [0, 1] \times \{0 \} ) \bigcup (\{1/2\} \times [0, 1/2) )$ with coherent topology with respect to $\{ [0, 1/2] \times \{0 \}, (1/2, 1] \times \{0 \}, \{1/2\} \times (0, 1/2)  \}$ and let $X = [0, 1]$. Define $p:\tx \to X$ by $  p(s, t) =  \begin{cases}  s & t =0 \\ s+1/2 & s = 1/2 \end{cases}. $ It is routine to check that $p$ is a local homeomorphism which does not have UPLP and PLP.
\end{ex}

In Section 3, we need the concept of incomplete lifting for local homeomorphisms which has been introduced by the authors in \cite[Lemma 3.1, 3.2]{mt2} as follows.
\begin{lem}\label{exist}  (\cite[Lemma 3.1]{mt2}).
Let $p:\tx \to X $ be a local homeomorphism, $f$ be an arbitrary path in $X$ and $ \tilde{x}_0 \in p^{-1}(f(0))$ such that there is no lifting of $f$
starting at $\tilde{x}_0$. If $ A_f = \{ t \in I \ | f|_{[0, t]} \ {\mathrm has} \mathrm{ \ a \ lifting} \  \hat{f}_t \ \mathrm{on} \  [0, t] \ \mathrm{with} \ \hat{f}_t(0)  = \tilde{x}_0\}$, then $A_f$ is open and connected. Moreover, there exists $ \alpha \in I $ such that $A_f = [0, \alpha)$.
\end{lem}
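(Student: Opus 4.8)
The plan is to check four things about $A_f$: it contains $0$; it is ``downward closed'' (if $t\in A_f$ and $0\le s\le t$ then $s\in A_f$), hence an interval; it does not contain $1$; and it is open in $I$. Together these pin down $A_f=[0,\alpha)$ with $\alpha\in(0,1]\subseteq I$.

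First, $0\in A_f$ because the constant map at $\tilde x_0$ is a lifting of $f|_{[0,0]}$. For downward closedness, if $t\in A_f$ with lifting $\hat f_t$ on $[0,t]$ and $0\le s\le t$, then $\hat f_t|_{[0,s]}$ lifts $f|_{[0,s]}$ and starts at $\tilde x_0$, so $s\in A_f$; note this uses only the definition, not any uniqueness of liftings. For $1\notin A_f$: if $1\in A_f$ then $f=f|_{[0,1]}$ would have a lifting starting at $\tilde x_0$, contradicting the hypothesis. So $A_f$ is an interval containing $0$ with $\sup A_f\le 1$, and every $t\in A_f$ satisfies $t<1$.

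The main step is openness of $A_f$ in $I$. Fix $t\in A_f$ with lifting $\hat f_t$ and set $\tilde y:=\hat f_t(t)$. Since $p$ is a local homeomorphism, choose an open neighborhood $\tilde W$ of $\tilde y$ with $p|_{\tilde W}\colon\tilde W\to p(\tilde W)$ a homeomorphism onto an open subset of $X$. As $t<1$ and $f$ is continuous, there is $\eps>0$ with $t+\eps\le 1$ and $f([t,t+\eps])\subseteq p(\tilde W)$. Define $\hat f_{t+\eps}\colon[0,t+\eps]\to\tx$ to equal $\hat f_t$ on $[0,t]$ and $(p|_{\tilde W})^{-1}\circ f$ on $[t,t+\eps]$. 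The two definitions agree at $t$ because $(p|_{\tilde W})^{-1}(f(t))=(p|_{\tilde W})^{-1}(p(\tilde y))=\tilde y=\hat f_t(t)$, so the pasting lemma makes $\hat f_{t+\eps}$ continuous, and $p\circ\hat f_{t+\eps}=f|_{[0,t+\eps]}$ by construction, with $\hat f_{t+\eps}(0)=\tilde x_0$. Hence $t+\eps\in A_f$, and by downward closedness $[0,t+\eps)\subseteq A_f$, an open neighborhood of $t$ in $I$. Thus $A_f$ is open in $I$.

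Finally, an $I$-open interval containing $0$ but not $1$ cannot be $[0,\alpha]$ for $\alpha<1$ (it would fail to be open at $\alpha$) and cannot be $[0,1]$, so $A_f=[0,\alpha)$ for some $\alpha$; openness at $0$ together with $0\in A_f$ forces $\alpha>0$, giving $\alpha\in(0,1]\subseteq I$. The only point demanding care is the gluing argument at the junction $t$ — making sure the two partial lifts genuinely agree there and that the chosen $\eps$ keeps $t+\eps$ inside $I$ — but this is routine given that $p$ is a local homeomorphism.
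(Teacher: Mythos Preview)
Your proof is correct and follows the standard argument for this type of statement. Note that the paper does not actually supply its own proof of this lemma; it merely records the result as a citation from \cite[Lemma 3.1]{mt2}, so there is nothing in the present paper to compare against. Your approach---showing $0\in A_f$, downward closedness, openness via the local homeomorphism extension at the endpoint, and $1\notin A_f$---is precisely the expected line of reasoning and would match any conventional proof of this fact.
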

\begin{lem}\label{cts} (\cite[Lemma 3.2]{mt2}).
let $p:\tx \to X $ be a local homeomorphism with UPLP, $f$ be an arbitrary path in $X$ and $ \tilde{x}_0 \in p^{-1}(f(0))$ such that there is no lifting of $f$ starting at $\tilde{x}_0$. Then, using notation of the previous lemma, there exists a unique continuous map $\tilde{f}_{\alpha}: A_f = [0, \alpha) \to \tx$ such that $p\circ\tilde{f}_{\alpha} = f|_{[0, \alpha)}$. We call $\tilde{f}_{\alpha}$ the {\it incomplete lifting} of $f$ by $p$ starting at $\tilde{x}_0$.
\end{lem}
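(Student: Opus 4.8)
The plan is to construct $\tilde f_\alpha$ by gluing together the partial liftings $\hat f_t$ furnished by Lemma~\ref{exist} (each unique by UPLP), the key point being that these partial liftings are mutually compatible. First I would observe that for any $t\le t'$ in $A_f=[0,\alpha)$, the restriction $\hat f_{t'}|_{[0,t]}$ is again a lifting of $f|_{[0,t]}$ starting at $\tilde x_0$; after reparametrising $[0,t]$ to $I$ and applying UPLP, this restriction must equal $\hat f_t$. In particular $\hat f_{t'}(t)=\hat f_t(t)$ whenever $t'\ge t$, so the assignment $\tilde f_\alpha(t):=\hat f_t(t)$ defines a well-defined function $\tilde f_\alpha\colon[0,\alpha)\to\tx$ satisfying $\tilde f_\alpha|_{[0,t]}=\hat f_t$ for every $t<\alpha$.

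Next I would check continuity. Given $t_0\in[0,\alpha)$, choose $t'$ with $t_0<t'<\alpha$; then $[0,t')$ is an open neighbourhood of $t_0$ in $[0,\alpha)$ on which $\tilde f_\alpha$ agrees with the continuous map $\hat f_{t'}$, so $\tilde f_\alpha$ is continuous at $t_0$. The identity $p\circ\tilde f_\alpha=f|_{[0,\alpha)}$ is then immediate, since on each $[0,t']$ we have $p\circ\hat f_{t'}=f|_{[0,t']}$; this also records that $\tilde f_\alpha(0)=\tilde x_0$.

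For uniqueness, suppose $g\colon[0,\alpha)\to\tx$ is continuous with $p\circ g=f|_{[0,\alpha)}$ and $g(0)=\tilde x_0$ (the natural normalisation, as any lift of $f|_{[0,\alpha)}$ necessarily begins in the fibre $p^{-1}(f(0))$). For each $t<\alpha$, both $g|_{[0,t]}$ and $\tilde f_\alpha|_{[0,t]}$ are liftings of $f|_{[0,t]}$ starting at $\tilde x_0$, so UPLP forces $g|_{[0,t]}=\tilde f_\alpha|_{[0,t]}$; since every $s<\alpha$ lies in some such $[0,t]$, we conclude $g=\tilde f_\alpha$.

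I do not anticipate a genuine obstacle: this is essentially a direct-limit (gluing) construction, and its only delicate points are transporting the UPLP hypothesis — stated for paths on $I$ — to the subintervals $[0,t]$ via reparametrisation, and handling the ``moving endpoint'' when verifying continuity, both of which are routine. The substantive input is Lemma~\ref{exist}, which fixes the shape $[0,\alpha)$ of the domain, together with UPLP, which makes the partial liftings cohere.
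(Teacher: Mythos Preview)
The paper does not supply its own proof of this lemma: it is quoted verbatim from \cite[Lemma 3.2]{mt2} as background, so there is nothing in the present paper to compare your argument against. That said, your proposal is the standard and correct construction: use the compatibility of the partial liftings $\hat f_t$ (forced by UPLP after reparametrisation) to glue them into a single map on $[0,\alpha)$, then read off continuity locally and uniqueness again from UPLP. No gaps.
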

The following theorem is stated in \cite[Theorem 3.7]{t}.
\begin{thm}\label{cov}
For a connected, locally path connected space $X$, there is a one-to-one correspondence between its equivalent classes of connected covering spaces and the cojugacy classes of subgroups of its fundamental group $\pi_1(X, x_0)$ with open core in $\pi_1^{qtop}(X, x_0)$.
\end{thm}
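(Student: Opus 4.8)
The plan is to prove the correspondence in the two natural directions, reducing its substance to Spanier's classical existence theory of covering spaces together with its translation into the topology of $\pi_1^{qtop}(X,x_0)$. First I would fix the dictionary: to a based connected covering $p:(\tilde X,\tilde x_0)\to(X,x_0)$ assign the subgroup $H:=p_*(\pi_1(\tilde X,\tilde x_0))$ of $\pi_1(X,x_0)$. Since $p_*$ is injective, $H\cong\pi_1(\tilde X)$; moving the base point $\tilde x_0$ inside the fibre $p^{-1}(x_0)$ replaces $H$ by a conjugate, and equivalent coverings yield the same subgroup, so this descends to a map from equivalence classes of connected coverings to conjugacy classes of subgroups. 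Its injectivity is the lifting criterion: if two connected coverings have conjugate image subgroups, then, after matching base points, each lifts over the other and the two lifts are mutually inverse. Here I may invoke Theorem \ref{criterion} through Theorem \ref{semicovering map}, since a covering map is a local homeomorphism with UPLP and PLP, hence a semicovering. Thus the genuine content is (a) every connected covering produces a subgroup with open core, and (b) every subgroup with open core is realised.

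For (a), recall $\mathrm{core}(H)=\bigcap_{g\in\pi_1(X,x_0)}gHg^{-1}$ is the largest normal subgroup of $\pi_1(X,x_0)$ contained in $H$, and that a subgroup containing an open subgroup is open (being a union of cosets), so ``$\mathrm{core}(H)$ open'' is equivalent to ``$H$ contains an open normal subgroup.'' Choose an open cover $\mathcal U$ of $X$ by path connected evenly covered sets; this exists since $p$ is a covering and $X$ is locally path connected. If $\gamma$ is a loop lying in some $U\in\mathcal U$ and $\alpha$ is any path in $X$ from $x_0$ to $\gamma(0)$, then $\gamma$ lifts to a loop over \emph{every} point of $p^{-1}(\gamma(0))$, because each sheet over $U$ maps homeomorphically onto $U$; hence $\alpha*\gamma*\alpha^{-1}$ lifts to a loop based at each point of $p^{-1}(x_0)$, so $[\alpha*\gamma*\alpha^{-1}]$ lies in $\bigcap_{\tilde x\in p^{-1}(x_0)}p_*(\pi_1(\tilde X,\tilde x))=\mathrm{core}(H)$. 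Thus the Spanier subgroup $\pi_1^{sp}(\mathcal U,x_0)$ generated by all such classes satisfies $\pi_1^{sp}(\mathcal U,x_0)\subseteq\mathrm{core}(H)\subseteq H$, and by the reformulation of Spanier's criterion in $\pi_1^{qtop}(X,x_0)$ this forces $\mathrm{core}(H)$ to be open.

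For (b), suppose $\mathrm{core}(H)$ is open. By the same reformulation there is an open cover $\mathcal U$ of $X$ with $\pi_1^{sp}(\mathcal U,x_0)\subseteq H$. I would then run the classical path-space construction: let $\tilde X_H$ be the set of classes $[\alpha]_H$ of paths $\alpha$ in $X$ with $\alpha(0)=x_0$, where $[\alpha]_H=[\beta]_H$ iff $\alpha(1)=\beta(1)$ and $[\alpha*\beta^{-1}]\in H$, topologised by the basic open sets $\langle[\alpha]_H,U\rangle=\{[\alpha*\delta]_H\mid\delta\text{ a path in }U\text{ from }\alpha(1)\}$ for $U\in\mathcal U$, with projection $q([\alpha]_H)=\alpha(1)$. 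The inclusion $\pi_1^{sp}(\mathcal U,x_0)\subseteq H$ guarantees that $q$ is a local homeomorphism with every $U\in\mathcal U$ evenly covered, hence a covering map, and with base point $\hat x_0:=[c_{x_0}]_H$ the usual loop-lifting bookkeeping gives $q_*(\pi_1(\tilde X_H,\hat x_0))=H$. Together with the injectivity from the first paragraph, the two assignments are mutually inverse, which yields the bijection.

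I expect the main obstacle to be the single analytic input used in both (a) and (b): the equivalence, for connected locally path connected $X$ and $H\le\pi_1(X,x_0)$, between ``$H$ contains some Spanier subgroup $\pi_1^{sp}(\mathcal U,x_0)$'' and ``$\mathrm{core}(H)$ is open in $\pi_1^{qtop}(X,x_0)$''. One half, that a subgroup containing a Spanier subgroup has open core, rests on comparing the quotient topology on $\pi_1^{qtop}(X,x_0)$ coming from the compact-open topology on the loop space with the combinatorial description of $\mathcal U$-small loops; the other half, that an open subgroup must absorb a Spanier subgroup, is delicate precisely because $\pi_1^{qtop}(X,x_0)$ need not be discrete. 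The appearance of the \emph{core} rather than of $H$ itself is forced by the normality of Spanier subgroups: realisability of $H$ by a covering depends only on whether the largest normal subgroup inside $H$ swallows one of them. Everything else — well-definedness of the assignments, the evenly covered lifting computation, and checking that the path-space construction is a covering with the prescribed image — is routine covering-space bookkeeping once this input is in hand.
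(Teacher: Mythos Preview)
The paper does not prove this theorem at all: it is quoted in the Preliminaries section with the attribution ``The following theorem is stated in \cite[Theorem 3.7]{t}'' and no argument is given. So there is no in-paper proof to compare against; your proposal is a genuine proof outline where the paper has only a citation.

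That said, your outline is the right one and matches the strategy of the cited source. The decomposition into (a) image subgroups of coverings have open core, via the Spanier group of an evenly-covered open cover, and (b) subgroups with open core are realised by the classical path-space construction, is exactly how the result is obtained, and you have correctly isolated the one nontrivial ingredient: the equivalence, for connected locally path connected $X$, between ``$H$ contains some Spanier group $\pi_1^{sp}(\mathcal U,x_0)$'' and ``$H$ has open core in $\pi_1^{qtop}(X,x_0)$''. One small sharpening: in the direction (a) you only need that $\pi_1^{sp}(\mathcal U,x_0)$ is open in $\pi_1^{qtop}(X,x_0)$ (this is the substantive fact), since normality of the Spanier group then forces it into $\mathrm{core}(H)$ automatically; you have the containment $\pi_1^{sp}(\mathcal U,x_0)\subseteq\mathrm{core}(H)$ but should also record openness to conclude. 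Otherwise the bookkeeping you describe is standard and correct.
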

The following theorem can be found in \cite[Theorem 2.21]{B2}.
\begin{thm}\label{semi}
Suppose $X$ is locally wep-connected and $x_0 \in X$. A subgroup $H \subseteq \pi_1(X, x_0)$ is open in $\pi_{1}^{qtop}(X, x_0)$ if and only if $H$ is a
semicovering subgroup of $\pi_1(X, x_0)$ .
\end{thm}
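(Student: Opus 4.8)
The plan is to prove the two implications separately, with local wep-connectedness entering only in the second. Throughout, write $(\rho X)_{x_0}$ and $\Omega(X,x_0)\sseq(\rho X)_{x_0}$ for the path space and loop space with the compact-open topology, and $q\colon\Omega(X,x_0)\To\pi_1^{qtop}(X,x_0)$ for the canonical quotient map, so that a subgroup $H\sseq\pi_1(X,x_0)$ is open in $\pi_1^{qtop}(X,x_0)$ exactly when $q^{-1}(H)$ is open in $\Omega(X,x_0)$.

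For the direction ``semicovering subgroup $\Rightarrow$ open'', suppose $H=p_*(\pi_1(\tilde X,\tilde x_0))$ for a semicovering $p\colon(\tilde X,\tilde x_0)\To(X,x_0)$. Since $p$ is a local homeomorphism, choose an open $\tilde W\ni\tilde x_0$ with $p|_{\tilde W}$ a homeomorphism onto an open subset of $X$; injectivity of $p|_{\tilde W}$ forces $\tilde W\cap p^{-1}(x_0)=\{\tilde x_0\}$. Because $p$ has continuous lifting of paths, the map $(\rho X)_{x_0}\To\tilde X$ sending $\alpha$ to $\tilde\alpha(1)$ (the inverse lifting map $\rho_p^{-1}$ followed by evaluation at $1$) is continuous, so $E:=\{\alpha\in\Omega(X,x_0)\mid\tilde\alpha(1)\in\tilde W\}$ is open in $\Omega(X,x_0)$. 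As $\tilde\alpha(1)\in p^{-1}(x_0)$, membership in $E$ is the same as $\tilde\alpha$ being a loop at $\tilde x_0$, i.e. $[\alpha]\in H$; and $E$ is a union of path-homotopy classes by Corollary \ref{cor homotopy} (lifts from $\tilde x_0$ of homotopic loops share their endpoint). Hence $E=q^{-1}(H)$, which is open, so $H$ is open. This step does not use local wep-connectedness.

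For the converse, assume $H$ is open and construct the ``$H$-cover''. Let $\tilde X_H$ be the set of classes $[\gamma]_H$ of paths $\gamma$ in $X$ with $\gamma(0)=x_0$, where $\gamma\sim_H\gamma'$ iff $\gamma(1)=\gamma'(1)$ and $[\gamma\ast\overline{\gamma'}]\in H$; set $p([\gamma]_H)=\gamma(1)$ and $\tilde x_0=[c_{x_0}]_H$. Topologize $\tilde X_H$ by taking as a basis the sets $\langle\gamma,U\rangle=\{[\gamma\ast\delta]_H\mid\delta\text{ a path in }U\text{ with }\delta(0)=\gamma(1)\}$ for $U$ an open path-connected neighbourhood of $\gamma(1)$. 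The steps are: (i) use openness of $H$, together with local wep-connectedness of $X$, to produce an open cover $\mathcal U$ of $X$ by wep-connected sets whose Spanier subgroup $\pi(\mathcal U,x_0)$ is contained in $H$; (ii) restricting to basic sets $\langle\gamma,U\rangle$ with $U$ subordinate to $\mathcal U$, show $p|_{\langle\gamma,U\rangle}\colon\langle\gamma,U\rangle\To U$ is a bijection (surjectivity from path-connectedness of $U$, injectivity precisely from $\pi(\mathcal U,x_0)\sseq H$), so $p$ is a local homeomorphism onto $X$; (iii) verify PLP by lifting a path $f$ with $f(0)=x_0$ as $t\mapsto[f|_{[0,t]}]_H$ (and from an arbitrary starting class by prepending a representing path), and verify UPLP directly from the definition of $\sim_H$ using (ii); (iv) apply Theorem \ref{semicovering map} to conclude $p$ is a semicovering, and check $p_*(\pi_1(\tilde X_H,\tilde x_0))=H$, since a loop $\alpha$ at $x_0$ lifts to $t\mapsto[\alpha|_{[0,t]}]_H$, which is a loop at $\tilde x_0$ iff $[\alpha\ast\overline{c_{x_0}}]=[\alpha]\in H$.

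The main obstacle is step (ii): one must verify that two paths $\delta,\delta'$ in such a $U$ with the same endpoint become $\sim_H$-equivalent after prepending $\gamma$, i.e. that $[\gamma\ast\delta\ast\overline{\delta'}\ast\overline{\gamma}]\in H$. This is exactly the local injectivity of $p$, and it is here --- and essentially only here --- that the hypotheses ``$H$ open in $\pi_1^{qtop}(X,x_0)$'' and ``$X$ locally wep-connected'' are consumed, via the reduction in step (i) to a Spanier subgroup lying inside $H$. Once $p$ is known to be a local homeomorphism with PLP and UPLP, Theorem \ref{semicovering map} yields that it is a semicovering, completing the argument.
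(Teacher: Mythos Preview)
First, note that the paper does not prove this theorem at all: it is quoted verbatim from \cite[Theorem~2.21]{B2} and used as a black box. So there is no ``paper's own proof'' to compare against; the comparison has to be with Brazas's argument in \cite{B2}.

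Your forward direction (semicovering $\Rightarrow$ open) is correct and is essentially the standard argument; this is also recorded separately in the paper as Corollary~\ref{c semi}.

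The converse, however, has a genuine gap at step~(i). You assert that openness of $H$ in $\pi_1^{qtop}(X,x_0)$ together with local wep-connectedness produces an open cover $\mathcal U$ whose Spanier group $\pi(\mathcal U,x_0)$ is contained in $H$. If such a cover existed, then the whisker-topology construction you describe in~(ii) would make $p:\tilde X_H\to X$ a genuine \emph{covering} map (each $U\in\mathcal U$ is evenly covered), and by Spanier's classical criterion $H$ would be a covering subgroup, not merely a semicovering subgroup. This is strictly too strong: the Hawaiian earring $\he$ is locally wep-connected and carries connected semicoverings that are not coverings (this is precisely the map used in Example~\ref{ex111}), so the corresponding open subgroup $H\leq\pi_1(\he)$ contains no Spanier group of any open cover. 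For such $H$ your step~(i) is simply false, and step~(ii) collapses with it.

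Brazas's proof in \cite{B2} does not go through the whisker topology or Spanier groups. Instead $\tilde X_H$ is topologized as a quotient of the based path space $(\rho X)_{x_0}$ with its compact-open topology; openness of $H$ in $\pi_1^{qtop}(X,x_0)$ is exactly what makes the fibers discrete in this topology, and local wep-connectedness is used to build local sections of the endpoint projection, yielding a local homeomorphism. The role of wep-connectedness is thus to control the \emph{topology} of the quotient path space, not to manufacture a Spanier-type cover. If you want to repair your argument, you should replace the whisker basis $\langle\gamma,U\rangle$ by this quotient topology and rework~(ii) accordingly.
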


The following corollary is a consequence of the above theorem (see \cite[Corollary 3.4]{B2}).
\begin{cor}\label{c semi}
Every semicovering subgroup of $\pi_1(X, x_0)$ is open in $\pi_1^{qtop}(X, x_0)$.
\end{cor}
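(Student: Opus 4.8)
The statement weakens one implication of Theorem~\ref{semi}: it asserts that every semicovering subgroup is open \emph{without} assuming $X$ locally wep-connected (that hypothesis being needed only for the converse). So the plan is to isolate a self-contained proof of the implication ``semicovering subgroup $\Rightarrow$ open''. By definition, $H$ being a semicovering subgroup of $\pi_1(X,x_0)$ means there is a semicovering map $q:(\ty,\tilde{y}_0)\to(X,x_0)$ with $q_*(\pi_1(\ty,\tilde{y}_0))=H$. Recall that $\pi_1^{qtop}(X,x_0)$ is $\pi_1(X,x_0)$ equipped with the quotient topology induced by the canonical surjection $q_\pi\colon\Omega(X,x_0)\to\pi_1(X,x_0)$, where $\Omega(X,x_0)=\{\ell\in(\rho X)_{x_0}\mid \ell(1)=x_0\}$ carries the compact-open topology; hence it suffices to show that $q_\pi^{-1}(H)$ is open in $\Omega(X,x_0)$.

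First I would introduce the ``endpoint of the lift'' map
\[
\lambda\colon \Omega(X,x_0)\longrightarrow q^{-1}(x_0),\qquad \lambda(\ell)=\widetilde{\ell}(1),
\]
where $\widetilde{\ell}$ denotes the unique lift of $\ell$ starting at $\tilde{y}_0$; this is well defined because a semicovering has PLP and UPLP (Theorem~\ref{semicovering map}), and it takes values in the fibre $q^{-1}(x_0)$ since $\ell(1)=x_0$. Now $q^{-1}(x_0)$ is discrete as a subspace of $\ty$, because $q$ is a local homeomorphism: each $\tilde{y}\in q^{-1}(x_0)$ has an open neighbourhood on which $q$ is injective, so $\{\tilde{y}\}$ is open in $q^{-1}(x_0)$. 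Consequently, once $\lambda$ is continuous it is locally constant, and therefore $\lambda^{-1}(\tilde{y}_0)$ is open in $\Omega(X,x_0)$.

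The crux is the continuity of $\lambda$, and here I would invoke Brazas's definition of a semicovering (\cite[Definition~3.1]{B1}): $q$ has continuous lifting of paths, i.e.\ $\rho_q\colon(\rho\ty)_{\tilde{y}_0}\to(\rho X)_{x_0}$, $\rho_q(\alpha)=q\circ\alpha$, is a homeomorphism. Then $\lambda$ is the composite of the restriction of $\rho_q^{-1}$ to $\Omega(X,x_0)\subseteq(\rho X)_{x_0}$ with the evaluation map $\mathrm{ev}_1\colon(\rho\ty)_{\tilde{y}_0}\to\ty$, both of which are continuous. (Alternatively, starting from Theorem~\ref{semicovering map} one uses the equivalence proved by Brazas between Klevdal's description and the continuous path-lifting condition.) Finally I would identify $\lambda^{-1}(\tilde{y}_0)$ with $q_\pi^{-1}(H)$: if $\lambda(\ell)=\tilde{y}_0$, then $\widetilde{\ell}$ is a loop at $\tilde{y}_0$ and $\ell=q\circ\widetilde{\ell}$, so $[\ell]=q_*[\widetilde{\ell}]\in H$; conversely, if $[\ell]\in H$, write $[\ell]=q_*[\gamma]$ with $\gamma$ a loop at $\tilde{y}_0$, so that $\ell\simeq q\circ\gamma$ rel $\dot{I}$, and by Corollary~\ref{cor homotopy} the lifts of $\ell$ and $q\circ\gamma$ that start at $\tilde{y}_0$ are homotopic rel $\dot{I}$, hence share the terminal point $\gamma(1)=\tilde{y}_0$; thus $\lambda(\ell)=\tilde{y}_0$. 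Therefore $q_\pi^{-1}(H)=\lambda^{-1}(\tilde{y}_0)$ is open, and $H$ is open in $\pi_1^{qtop}(X,x_0)$.

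I expect the only real obstacle to be expository rather than mathematical: being explicit about which formulation of ``semicovering'' underlies the continuity of $\lambda$ (the cleanest input is that $\rho_q$ is a homeomorphism), and making sure the topology on $\pi_1^{qtop}(X,x_0)$ is taken as the quotient of the compact-open topology on $\Omega(X,x_0)$, so that the reduction to ``$q_\pi^{-1}(H)$ is open'' is legitimate. Discreteness of the fibre and the homotopy-invariance step are routine once Theorem~\ref{semicovering map} and Corollary~\ref{cor homotopy} are in hand.
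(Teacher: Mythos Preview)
Your argument is correct and self-contained: the endpoint-of-lift map $\lambda$ is continuous because $\rho_q$ is a homeomorphism, the fibre $q^{-1}(x_0)$ is discrete, and the identification $\lambda^{-1}(\tilde{y}_0)=q_\pi^{-1}(H)$ via Corollary~\ref{cor homotopy} is clean. No gaps.

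The paper, however, does not prove this corollary at all: it simply records it as a consequence of \cite[Corollary~3.4]{B2} (the sentence preceding the statement says ``see \cite[Corollary~3.4]{B2}''). In particular, the paper does not derive it from Theorem~\ref{semi} as stated, since that theorem carries the locally wep-connected hypothesis which the corollary drops; the citation to Brazas is doing the real work. What you have written is essentially the argument Brazas gives for the unconditional implication, so your approach is not so much different from the paper's as it is a proof where the paper offers only a reference. The benefit of your version is that it is self-contained within the present paper's framework (using Theorem~\ref{semicovering map} and Corollary~\ref{cor homotopy}) and makes explicit which half of the semicovering definition---continuity of $\rho_q^{-1}$---is actually needed.
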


\section{Subsemicovering and Subcovering Maps}

Let $p:\tx \to X $ be a local homeomorphism. We are interested in finding some conditions on $p$ or $\tx$ under which the map $p$ can be extended to a semicovering map $ q: \tilde{Y} \to X $.
We recall that Steinberg \cite[Section 4.2]{sting} defined a map $p: \tx \to X $ of locally path connected and semilocally simply connected spaces a {\it subcovering map (and $\tx$ a subcover)} if there exists a covering map $p': \ty \to X $ and a topological embedding $i: \tx \to \ty $ such that $ p' \circ i =p$. We are going to extend this definition as follows:

\begin{defn} \label{subsemicovering}
Let $p:\tx \to X $ be a local homeomorphism. We say that $p$  {\it can be extended to a local homeomorphism $q: \tilde{Y} \to X $}, if there exists an embedding map $\varphi: \tx \hookrightarrow \tilde{Y} $ such that $q \circ \varphi = p$. In particular, if $q$ is a covering map, then $p$ is called a  {\it subcovering map} (see \cite[Section 4.2]{sting}) and if $q$ is a semicovering map, then we call the map $p$ a {\it subsemicovering map}. Moreover, if $ p_*(\pi_1(\tilde{X}, \tilde{x}_0)) =  q_*(\pi_1(\tilde{Y}, \tilde{y}_0))$, then we call the map $p$ {\it full subcovering} and {\it full subsemicovering}, respectively.
\end{defn}

Note that since every covering map is a semicovering map, every subcovering map is a subsemicovering map.
Also, if $p:(\tilde{X}, \tilde{x}_0) \to (X, x_0) $ can be extended to $ q: (\tilde{Y}, \tilde{y}_0)\to (X, x_0) $ via $\varphi: (\tilde{X}, \tilde{x}_0) \to  (\tilde{Y}, \tilde{y}_0)$, then $ p_*(\pi_1(\tilde{X}, \tilde{x}_0))$ is a subgroup of $q_*(\pi_1(\tilde{Y}, \tilde{y}_0))$.

The following example shows that a local homeomorphism may be extended to various covering maps.
\begin{ex}\label{ex11}
Let $X = S^1 \vee S^1 = \{e^{2 \pi it}+1 | t \in \RR \} \cup  \{e^{2 \pi it} - 1 | t \in \RR \}$ be the figure eight space, $ \tx = \RR \times \{0\}(\cup _{n \in \ZZ}\{(-1, 1)\times \{n\}\}) $ and $p: \tx \to X$ defined by $$ p(t, s)= \begin{cases}  e^{2 \pi it}+1 & s =0 \\ e^{2 \pi is}-1 & s \neq 0. \end{cases} $$
Then $p$ is a subcovering map since $p$ can be extended to the universal cover of figure eight space introduced in \cite[Section 1.3]{r} which we denote it by $ h: \tilde{Z} \to X$. Note that one can extend $p$ to the covering $ q: \ty \to X $ where $\ty = (\RR \times \{0\} ) \cup_{n \in \NN} (S^1 \times \{n\}) $ via an embedding map $\varphi : \tx \to \ty $ defined by $$\varphi(t,s) = \begin{cases}  (t,0) & s =0 \\( e^{2 \pi it}, t) & s \neq 0. \end{cases}$$
Hence $p$ can be extended to two coverings which are not equivalent since $p_*(\pi_1(\tilde{X}, \tilde{x}_0)) =h_*(\pi_1(\tilde{Z}, \tilde{z}_0)) = \{1\}$ but $\{1\}=  p_*(\pi_1(\tilde{X}, \tilde{x}_0)) \lvertneqq q_*(\pi_1(\tilde{Y}, \tilde{y}_0)) \leq \pi_1(X, x_0)$. Note that $p$ is a full subcovering map since $p_*(\pi_1(\tilde{X}, \tilde{x}_0)) =h_*(\pi_1(\tilde{Z}, \tilde{z}_0))$.
\end{ex}

The following example shows that there exists a subsemicovering map which is not a full subsemicovering map.
\begin{ex}\label{ex111}
Let $ X = \bigcup_{n \in \NN} \{(x, y) \in \RR^2 | (x- \frac{1}{n})^2 +y ^2 = \frac{1}{n^2}\} $ be the Hawaiian Earring space. Brazas \cite[Example 3.8]{B1} introduced a connected semicovering $p :\tx \to X $ with discrete fibers which is not a covering map. Put $\hat{X} = p^{-1}(X \setminus  ((0, 1]) \times \{0\}) $, then $\hat{X}$ is path connected. It is easy to see that every loop in $ \hat{X}$ is null homotopic. Also, $q = p|_{\hat{X}} :\hat{X} \to X$ is a local homeomorphism with $q_* (\pi_1 (\hat{X}, \hat{x}_0))= \{ 1\} \leq \pi_1(X)$.
Calcut and McCarthy \cite[Theorem 1]{Calcut} proved that for a connected and locally path connected space $X$, semilocally simply connectedness of $X$ is equivalent to openness of the trivial subgroup in $\pi_{1}^{qtop}(X)$. Hence the trivial subgroup is not open in $\pi_{1}^{qtop}(\he)$ since $\he$ is not semilocally simply connected at the point $(0,0)$. This implies that $q_* (\pi_1 (\hat{X}, \hat{x}_0))$ is not open in $\pi_{1}^{qtop}(\he)$. Since $q$ can be extended to the semicovering map $p$, $q$ is a subsemicovering map. Note that $q$ is not a full subsemicovering map since otherwise there exists a semicovering map $r: \ty \to \he$ such that $r_*(\pi_1(\ty, \tilde{y}_0)) =  q_*(\pi_1(\hat{X}, \hat{x}_0))$. By Corollary $\ref{c semi}$ $r_*(\pi_1(\ty, \tilde{y}_0))$ is open in $\pi_{1}^{qtop}(\he)$ but $q_* (\pi_1 (\hat{X}, \hat{x}_0))$ is not open in $\pi_{1}^{qtop}(\he)$ which is a contradiction.
\end{ex}

In the following, we define a strong version of the unique path lifting property in order to find a necessary condition for a local homeomorphism to be subsemicovering.
\begin{defn} \label{strong}
Let $p:\tx \to X $ be a local homeomorphism and $f: [0,\alpha) \to X$ be an arbitrary continuous map, $\tilde{f}:[0,\alpha) \to \tx$ be the incomplete lifting of $f$ defined in Lemma \ref{cts} with starting point $\tilde{x}_0\in p^{-1}(f(0))$. Then, we say that $p$ has the {\it strong unique path lifting property} (strong UPLP for short) if there exist $ \varepsilon_{(f,\tilde{x}_0)} >0$ and an open set $U_{(f,\tilde{x}_0)} \subseteq \tx $ such that $\tilde{f}(\alpha-\varepsilon_{(f,\tilde{x}_0)},\alpha)\subseteq U_{(f,\tilde{x}_0)}$ and $p|_{U_{(f,\tilde{x}_0)}}:U_{(f,\tilde{x}_0)} \to p(U_{(f,\tilde{x}_0)})$ is one-to-one. Note that $p|_{U_{(f,\tilde{x}_0)}}$ is a homeomorphism since it is open. We call ${U_{(f,\tilde{x}_0)}}$ a strong neighborhood.
\end{defn}

In the following lemma, we show that every local homeomorphism with strong UPLP has UPLP .
\begin{lem}\label{SUPLP}
If a local homeomorphism has strong UPLP, then so has UPLP.
\end{lem}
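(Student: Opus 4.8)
The plan is to argue by contradiction, extracting from a failure of UPLP a violation of the strong neighborhood condition.

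Suppose $p:\tx\to X$ is a local homeomorphism with strong UPLP but \emph{without} UPLP. Then there are a path $f$ in $X$ and two distinct liftings $\tilde f_1,\tilde f_2:(I,0)\to(\tx,\tilde x_0)$ with the same initial point $\tilde x_0\in p^{-1}(f(0))$. I would first record the standard fact that the agreement set $E=\{t\in I:\tilde f_1(t)=\tilde f_2(t)\}$ is open: at a parameter $t$ with $\tilde f_1(t)=\tilde f_2(t)$, pick an open set $\tilde W$ about this common value on which $p$ restricts to a homeomorphism; by continuity both lifts carry a neighborhood of $t$ into $\tilde W$, and there $p\circ\tilde f_1=p\circ\tilde f_2=f$ together with injectivity of $p|_{\tilde W}$ force $\tilde f_1=\tilde f_2$. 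Since $0\in E$ while $E\neq I$, set $\alpha=\sup\{t\in I:[0,t]\subseteq E\}$. Using openness of $E$ one checks that $0<\alpha\le 1$, that $\tilde f_1$ and $\tilde f_2$ agree on $[0,\alpha)$, and that $\tilde f_1(\alpha)\neq\tilde f_2(\alpha)$.

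Next I would invoke strong UPLP. Put $g=f|_{[0,\alpha)}$ and let $\tilde g=\tilde f_1|_{[0,\alpha)}=\tilde f_2|_{[0,\alpha)}:[0,\alpha)\to\tx$, which plays the role of the incomplete lifting of $g$ starting at $\tilde x_0$ in the sense of Lemma~\ref{cts}. Strong UPLP then yields $\eps>0$ and an open set $U\subseteq\tx$ with $\tilde g(\alpha-\eps,\alpha)\subseteq U$ such that $p|_{U}:U\to p(U)$ is a homeomorphism onto the open set $p(U)$. Since $p$ is a local homeomorphism, I would also fix open sheets $\tilde W_1\ni\tilde f_1(\alpha)$ and $\tilde W_2\ni\tilde f_2(\alpha)$ on each of which $p$ is a homeomorphism, and then shrink $\eps$ so that moreover $\tilde f_i(\alpha-\eps,\alpha]\subseteq\tilde W_i$ for $i=1,2$ (possible by continuity of $\tilde f_i$ at $\alpha$). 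The common arc then lies in $U\cap\tilde W_1\cap\tilde W_2$, so this set is nonempty, and on $(\alpha-\eps,\alpha)$ we have simultaneously $\tilde f_1=\tilde g=(p|_{U})^{-1}\circ f$ and $\tilde f_i=(p|_{\tilde W_i})^{-1}\circ f$ for $i=1,2$.

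The final step — which I expect to be the main obstacle — is to deduce from this that $\tilde f_1(\alpha)=\tilde f_2(\alpha)$, contradicting the second paragraph. The idea is that $U$, $\tilde W_1$ and $\tilde W_2$ are local sections of $p$ all passing through the common arc, so the subset of $X$ near $f(\alpha)$ over which $(p|_{U})^{-1}$, $(p|_{\tilde W_1})^{-1}$ and $(p|_{\tilde W_2})^{-1}$ all coincide is open and nonempty (it contains $f(\alpha-\eps,\alpha)$), and — propagating along $f$ by a connectedness argument — contains $f(\alpha)$ itself; hence $\tilde f_1(\alpha)=(p|_{\tilde W_1})^{-1}(f(\alpha))=(p|_{U})^{-1}(f(\alpha))=(p|_{\tilde W_2})^{-1}(f(\alpha))=\tilde f_2(\alpha)$. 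It is exactly this passage from the punctured arc $f(\alpha-\eps,\alpha)$ to the endpoint $f(\alpha)$ that is delicate, and it is here that strong UPLP is indispensable. (If $\tx$ were Hausdorff, $E$ would automatically be closed, hence equal to $I$, and there would be nothing to prove; the role of strong UPLP is precisely to supply, in the general setting, the missing control near the endpoint.) Ordinary UPLP gives no such control, whereas the strong neighborhood $U$ forces the common arc to stay inside a single sheet, which is what rules out $\tilde f_1(\alpha)\neq\tilde f_2(\alpha)$. The remaining verifications are routine point-set topology.
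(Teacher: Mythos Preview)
Your opening matches the paper's: the agreement set $E=\{t:\tilde f_1(t)=\tilde f_2(t)\}$ is open, its component through $0$ is $[0,\alpha)$, and $\tilde f_1(\alpha)\ne\tilde f_2(\alpha)$. The divergence comes in the final step, and there the proof has a genuine gap. The strong neighborhood $U$ you extract for $\tilde g=\tilde f_1|_{[0,\alpha)}$ is only required to contain the \emph{open} arc $\tilde g(\alpha-\eps,\alpha)$; nothing in the definition forces $\tilde f_1(\alpha)$, $\tilde f_2(\alpha)$, or even $f(\alpha)$, to lie in $U$ (respectively $p(U)$). Your ``connectedness argument'' for pushing the agreement of the local sections $(p|_U)^{-1}$, $(p|_{\tilde W_1})^{-1}$, $(p|_{\tilde W_2})^{-1}$ from $f(\alpha-\eps,\alpha)$ out to $f(\alpha)$ tacitly uses uniqueness of limits in $\tx$: that the locus where two local sections coincide is \emph{closed} is precisely a Hausdorff-type statement, and you yourself note that if $\tx$ were Hausdorff there would be nothing to prove. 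Strong UPLP gives you one injective sheet containing the tail of $\tilde g$, but no control at the missing endpoint; the step you call ``routine point-set topology'' is the whole content of the lemma and does not go through as written.

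The paper closes this gap with an additional construction you are missing. Rather than applying strong UPLP to $\tilde g$, it manufactures a path $\tilde h:[0,1)\to\tx$ which, as $t\to 1$, oscillates: it alternately follows $\tilde f_1$ and $\tilde f_2$ over parameter intervals that repeatedly cross $\alpha$, so that \emph{both} points $\tilde f_1(\alpha)$ and $\tilde f_2(\alpha)$ actually lie in the image $\tilde h(1-\delta,1)$ for every $\delta>0$. Applying strong UPLP to $h=p\circ\tilde h$ then yields a strong neighborhood $U$ with $\tilde h(1-\eps,1)\subseteq U$; hence $\tilde f_1(\alpha),\tilde f_2(\alpha)\in U$, and since these are distinct points with $p(\tilde f_1(\alpha))=p(\tilde f_2(\alpha))$, injectivity of $p|_U$ is violated directly. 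The oscillating path is exactly the device that forces the two branching endpoints into a single injective sheet, and it is what your sketch lacks.
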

\begin{proof}
Suppose that $p:\tx \to X $ is a local homeomorphism with strong UPLP  but it does not have UPLP. Then there exist $ \tilde{f}_1$ and  $ \tilde{f}_2$ such that $p \circ \tilde{f}_1 = f = p \circ \tilde{f}_2$ and $\tilde{f}_1(0) = \tilde{f}_2(0) = \tilde{x}_0 $. Put $A = \{ t \in I | \tilde{f}_1(t) =\tilde{f}_2(t)\}$.  We show that $A$ is an open subset of $I$. Let $a \in A$, then $ \tilde{f}_1(a)= \tilde{f}_2(a)= b $. Since $(\tx, p)$ is a local homeomorphism, there exists $V \subseteq \tx $ such that $ b \in V , \  p|_V:V \to p(V) $ is a homeomorphism. Clearly $ W = \tilde{f}_1^{-1}(V)\cap \tilde{f}_2^{-1}(V)$ is an open subset of $I$. Let $ w \in W$, then $p \circ \tilde{f}_1(w)= p \circ \tilde{f}_2(w) $ and $\tilde{f}_1(w), \tilde{f}_2(w) \in V  $ and $p|_V$ is one-to-one, and so $\tilde{f}_1(w)= \tilde{f}_2(w)$ hence $ W \subseteq A$. Thus $A$ is an open subset of $I$. Since $0 \in A$, we can consider the connected component containing $0$ in $A$, $C$ say, which is open and connected so there exists $\alpha \in I$ such that $C =[0, \alpha)$. Define $\lambda_n : B_n = [1- \frac{1-\alpha}{2 \times n}, 1- \frac{1-\alpha}{2 \times (n+1)}] \to [0, 1] $ by $\lambda_n (t) = (\frac{t - ( 1- \frac{1-\alpha}{2 \times n})}{ (1- \frac{1-\alpha}{2 \times (n+1)})- (1- \frac{1-\alpha}{2 \times n})}) $ for all $n \in \NN$. Note that $\lambda_n$ is a homeomorphism. Now, we define
$$\tilde{h}(t)=\begin{cases} \tilde{f}_1(t) & t \in \ [0, 1 - \frac{1-\alpha}{2 \times 1})
\\ \tilde{f}_1((\lambda_n (t) \times (\alpha- \frac{\alpha}{n+1 }))+((1- \lambda_n (t))\times (1 - \frac{1-\alpha}{n+1}))) &   t \in B_n, n = 4i+1, i \in \NN_0
 \\ \tilde{f}_2((\lambda_n (t) \times (\alpha+\frac{1-\alpha}{n+2}) )+((1- \lambda_n (t))\times (\alpha- \frac{\alpha}{n+1 }))) &   t \in B_n, n = 4i+2, i \in \NN_0
 \\ \tilde{f}_2((\lambda_n(t) \times (\alpha- \frac{\alpha}{n+2 }))+((1- \lambda_n (t))\times (\alpha+\frac{1-\alpha}{n+2}))) &  t \in B_n, n = 4i+3, i \in \NN_0
 \\ \tilde{f}_1((\lambda_n (t) \times (\alpha+\frac{1-\alpha}{n+3}) )+((1- \lambda_n (t))\times (\alpha- \frac{\alpha}{n+2}))) &  t \in B_n, n = 4i, i \in \NN
\end{cases} $$
and put $h= p \circ \tilde{h}$. It is easy to see that $h, \tilde{h} $ are continuous map. Since $p$ has strong UPLP, there exist $ \varepsilon_{(h,\tilde{x}_0)} >0$, an open set $U_{(h,\tilde{x}_0)} \subseteq \tx $ such that $\tilde{h}(1-\varepsilon_{(h,\tilde{x}_0)},1)\subseteq U_{(h,\tilde{x}_0)}$ and $p|_{U_{(h,\tilde{x}_0)}}:U_{(h,\tilde{x}_0)} \to p(U_{(h,\tilde{x}_0)})$ is one-to-one. There exists $n \in \NN$ such that $\frac{1-\alpha}{2 \times n} < \varepsilon_{(h,\tilde{x}_0)} $ and by the definition of $\tilde{h}$ we have $ \tilde{f}_1(\alpha) \in  \tilde{h}(1-\varepsilon_{(h,\tilde{x}_0)},1)$ and $ \tilde{f}_2(\alpha) \in  \tilde{h}(1-\varepsilon_{(h,\tilde{x}_0)},1)$. But $ \tilde{f}_1(\alpha) \neq \tilde{f}_2(\alpha) $ and $p \circ \tilde{f}_1(\alpha) = p \circ \tilde{f}_2(\alpha)$ which contradicts to the injectivity of $p|_{U_{(h,\tilde{x}_0)}}$.
\end {proof}

There exists a local homeomorphism with UPLP which does not have strong UPLP.
\begin{ex}\label{st ex}
\begin{figure}[h!]
\centering
\includegraphics[scale=.5]{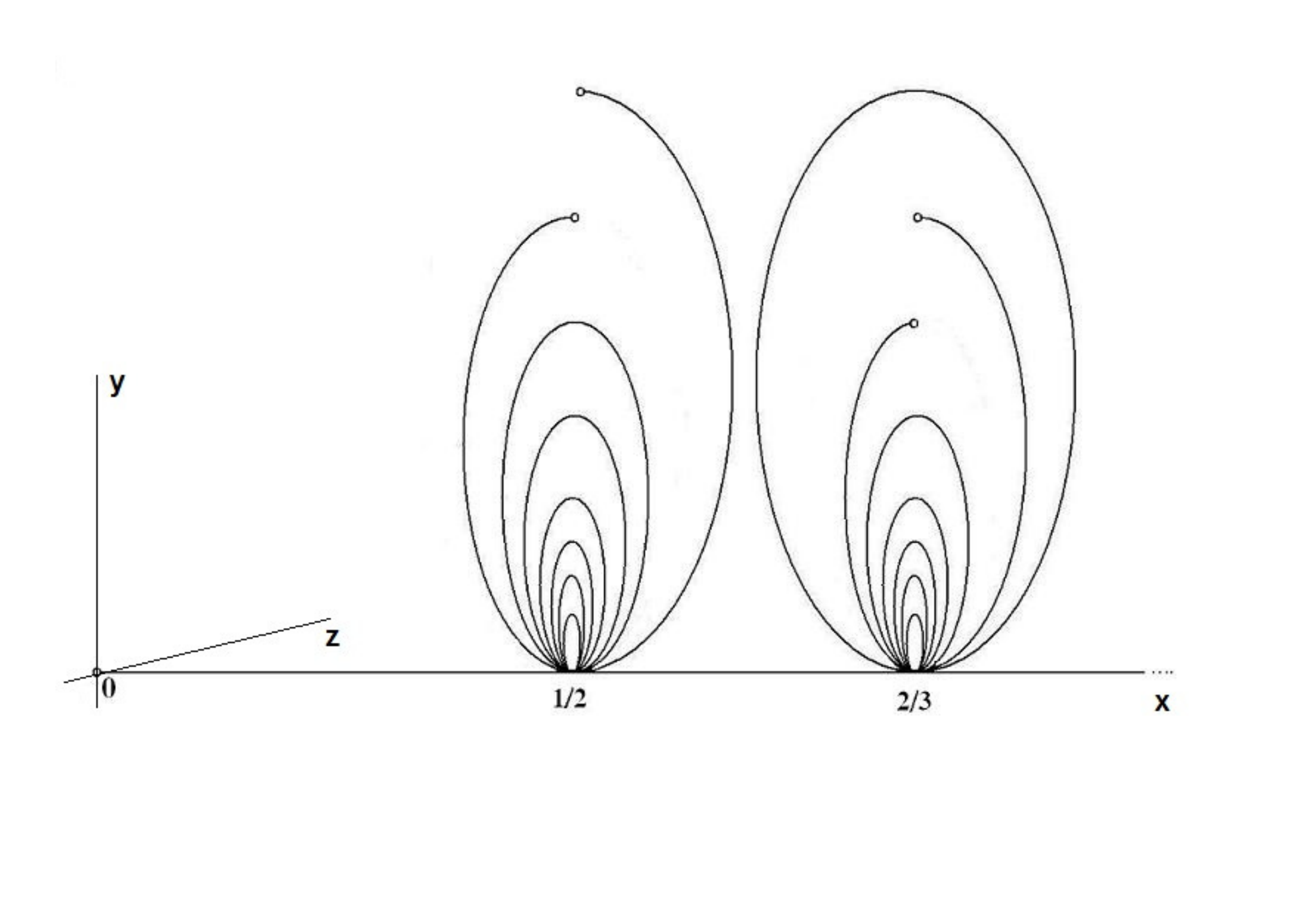}
\caption{$\tilde{X}$}\label{fig1}
\end{figure}

Let $X=\he = \bigcup_{n \in \NN}\{(x, y) \in \RR^2 | (x- \frac{1}{n})^2 +y ^2 = \frac{1}{n^2}\}$ be the Hawaiian Earring space. Put $\it{W_i} = \bigcup_{n \in \{\NN \setminus \{i , i+1\} \}}\{(y, z) \in \RR^2 | (y- \frac{1}{n})^2 +z ^2 = \frac{1}{n^2}\}$ and
$$ S_i=\bigcup \{(y, z)| (y - (1- \frac{1}{i}))^2 + z^2 =  (\frac{1}{i})^2 , z > 0 \}
\bigcup  \{(y, z)| (y - (1- \frac{1}{i+1}))^2 + z^2 =  (\frac{1}{(i+1)})^2 , z < 0 \} $$ for every $i \in \NN$. Let  $\tx = ((0, 1) \times \{ 0\} \times \{ 0\}) \bigcup_{i = 1}^{\infty}( \{1- \frac{1}{i+1}\} \times  (\it{W_i} \cup S_i))$  be a subset of $ \RR^3 $ (see Figure 1). We define $p:\tx \to X $ by
$$p(x, y, z) = \begin{cases} (y, z) & x = 1- \frac{1}{i+1},\ i \in\NN\\
\\
\frac{1}{i}(1 + \cos( \frac{2 \pi}{1-x}), \sin( \frac{2 \pi}{1-x}))  &  1- \frac{1}{i} < x < 1- \frac{1}{i+1},\ i\in\NN. \\
 \end{cases}$$
 It is routine to check that $p$ is a local homeomorphism which has UPLP. Let $\alpha : I \to X$ be a loop defined by $$\alpha (t)= \begin{cases} (0, 0) & t \in [0, \frac{1}{2}] \cup \{1\}
\\ \frac{1}{i}(1 + \cos( \frac{2 \pi}{1-t}), \sin( \frac{2 \pi}{1-t}))  &  1- \frac{1}{i} \leq t \leq 1- \frac{1}{i+1},\ i\in\NN \setminus \{1\}. \\
 \end{cases}$$
 The loop $\alpha$ has no lifting with starting point $(\frac{1}{2}, 0, 0)$ and the incomplete lifting of $\alpha$ with starting point $(\frac{1}{2}, 0, 0)$ is $\tilde{\alpha}: [0,1) \to \tx$ defined by $$\tilde{\alpha}(t)= \begin{cases} (\frac{1}{2}, 0, 0) & t \in [0, \frac{1}{2}]
\\ (t, 0, 0)  &  t \in [\frac{1}{2}, 1).
 \end{cases}$$
 Thus $\tilde{\alpha}$ does not have any strong neighborhood. Therefore $p$ does not have strong UPLP.
\end{ex}

In the following theorem, we show that the strong UPLP is a necessary condition for a local homeomorphism to be a subsemicovering map.
\begin{thm} \label{strong UPLP}
If $p$ is a subsemicovering map, then $p$ has strong UPLP.
\end{thm}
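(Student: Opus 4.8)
The plan is to lift the whole situation up into the semicovering $\tilde Y$, where a \emph{complete} path lift always exists, and then pull a chart around its endpoint back down along the embedding. Concretely: since $p$ is subsemicovering, fix a semicovering map $q\colon\tilde Y\to X$ and an embedding $\varphi\colon\tx\hookrightarrow\tilde Y$ with $q\circ\varphi=p$. Let $f$ be a path in $X$ and $\tilde x_0\in p^{-1}(f(0))$ a point from which $f$ has no lift, and let $\tilde f\colon[0,\alpha)\to\tx$ be the incomplete lifting of $f$ by $p$ starting at $\tilde x_0$ furnished by Lemma \ref{cts} (with $\alpha$ as in Lemma \ref{exist}). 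We must produce $\varepsilon>0$ and an open $U\subseteq\tx$ with $\tilde f(\alpha-\varepsilon,\alpha)\subseteq U$ and $p|_U$ injective.

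\textbf{Step 1 (transport up and complete the lift).} Observe that $\varphi\circ\tilde f\colon[0,\alpha)\to\tilde Y$ is a $q$-lift of $f|_{[0,\alpha)}$ starting at $\varphi(\tilde x_0)$. Since $q$ is a semicovering, by Theorem \ref{semicovering map} it has PLP, so the whole path $f$ has a $q$-lift $\hat f\colon I\to\tilde Y$ with $\hat f(0)=\varphi(\tilde x_0)$. Using the UPLP of $q$ on each compact subinterval $[0,\alpha-\delta]$ (reparametrized to $I$) and letting $\delta\to 0$, we get $\hat f|_{[0,\alpha)}=\varphi\circ\tilde f$. In particular $\hat f$ is defined and continuous at $\alpha$, and $\hat f(t)=\varphi(\tilde f(t))$ for $t<\alpha$.

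\textbf{Step 2 (a chart at the endpoint, pulled back).} Since $q$ is a local homeomorphism, choose an open $\tilde W\subseteq\tilde Y$ with $\hat f(\alpha)\in\tilde W$ and $q|_{\tilde W}\colon\tilde W\to q(\tilde W)$ a homeomorphism. By continuity of $\hat f$ there is $\varepsilon>0$ with $\alpha-\varepsilon\ge 0$ and $\hat f([\alpha-\varepsilon,\alpha])\subseteq\tilde W$; hence $\varphi\bigl(\tilde f(\alpha-\varepsilon,\alpha)\bigr)=\hat f(\alpha-\varepsilon,\alpha)\subseteq\tilde W$. Put $U:=\varphi^{-1}(\tilde W)$. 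Then $U$ is open in $\tx$ because $\varphi$ is continuous, and $\tilde f(\alpha-\varepsilon,\alpha)\subseteq U$. Finally, if $u_1,u_2\in U$ with $p(u_1)=p(u_2)$, then $q(\varphi(u_1))=q(\varphi(u_2))$ with $\varphi(u_1),\varphi(u_2)\in\tilde W$; injectivity of $q|_{\tilde W}$ gives $\varphi(u_1)=\varphi(u_2)$, and injectivity of $\varphi$ gives $u_1=u_2$. Thus $p|_U$ is one-to-one, so $U$ is a strong neighborhood and $p$ has strong UPLP.

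There is no genuinely hard step here: the construction is a straightforward ``lift, complete, and pull back'' argument. The only two points requiring a moment's care are (i) the passage from the half-open incomplete lift $\varphi\circ\tilde f$ to the genuine $q$-lift $\hat f$ defined at $\alpha$ — this is exactly where the semicovering hypothesis on $q$ (its PLP, plus UPLP for the identification $\hat f|_{[0,\alpha)}=\varphi\circ\tilde f$, via Theorem \ref{semicovering map}) is used — and (ii) checking that the chart $\tilde W$ at $\hat f(\alpha)$ pulls back under the merely-embedded $\varphi$ to an \emph{open} subset of $\tx$, which is automatic by continuity of $\varphi$. (As a byproduct, together with Lemma \ref{SUPLP} this re-confirms that every subsemicovering map has UPLP.)
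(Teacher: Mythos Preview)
Your proof is correct and follows essentially the same route as the paper: lift the incomplete $p$-lift into the semicovering $\tilde Y$ via $\varphi$, use PLP of $q$ to obtain a complete lift $\hat f$ defined at $\alpha$, take a local-homeomorphism chart $\tilde W$ around $\hat f(\alpha)$, and pull it back along $\varphi$ to get the required strong neighborhood. Your write-up is in fact a bit more careful than the paper's (you justify $\hat f|_{[0,\alpha)}=\varphi\circ\tilde f$ via UPLP of $q$ and spell out the injectivity of $p|_U$ using that $\varphi$ is an embedding), but the argument is the same.
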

\begin{proof}
If $p:\tx \to X $ is a semicovering map, then it is easy to check that $p$ has strong UPLP. Suppose $p$ is subsemicovering which is not a semicovering map. So there exists a semicovering map $ q: \tilde{Y} \to X $ with an embedding map $\varphi: \tx \to  \tilde{Y}$ such that $q \circ \varphi = p $.
Since $p$ is not semicovering, there exists a path $f$ in $X$ with no lifting. By Lemma \ref{cts}, there exists $\tilde{f}:[0, \alpha) \to \tx$ with starting point $\tilde{x}_0\in p^{-1}(f(0))$ such that $ p \circ \tilde{f} = f $. Also, since $q$ is a semicovering map, $q$ has PLP. Thus there exists a lifting $\hat{f}$ of $f$ in $\tilde{Y}$ with starting point $\varphi(\tilde{x}_0 )$ and $ \varphi(\tilde{f}([0, \alpha))) = \hat{f}|_{[0, \alpha)} $. Since $q$ is a semicovering map, there exists an open neighborhood $U$ at $\hat{f}(\alpha)$ such that $p|_U : U \to p(U)$ is a homeomorphism. Put $U_{(f,\tilde{x})} = \varphi^{-1}(U) \cap \tx $, then there exists $\epsilon > 0$ such that $\tilde{f}(\alpha- \epsilon, \alpha)\subseteq U_{(f,\tilde{x})}$. Also, $p:U_{(f,\tilde{x})} \to p(U_{(f,\tilde{x})})$ is one-to-one since $ q: U \to q(U)$ is a homeomorphism.
\end {proof}

Steinberg \cite[Theorem 4.6]{sting} proved that the condition {\it ``if $f$ is a path in $\tx$ with $p \circ f$ null homotopic (in $X$), then $f(0) = f(1)$''} is a necessary condition for a local homeomorphism $p:\tx \to X $ to be subcovering. In the following theorem, we show that this condition is also a necessary condition for a local homeomorphism to be subsemicovering.
\begin{thm} \label{lazem}
If $p :(\tx, \tilde{x}_0) \to (X, x_0) $ is a subsemicovering map, then
\begin{enumerate}
\item
$p :(\tx, \tilde{x}_0) \to (X, x_0)$ is a local homeomorphism;
\item
if $f$ is a path in $\tx$ with $p \circ f$ null homotopic (in $X$), then $f(0) = f(1)$. $(\bigstar) $
\end{enumerate}
\end{thm}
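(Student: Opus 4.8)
The plan is to dispatch the two items separately, the first being immediate and the second being a lifted-homotopy argument transported across the embedding.

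For item (1) there is nothing to prove: by Definition \ref{subsemicovering} a subsemicovering map is, by fiat, a local homeomorphism, so $p\colon(\tx,\tilde{x}_0)\to(X,x_0)$ is one.

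For item (2) I would first unwind the hypothesis. Since $p$ is subsemicovering there is a semicovering map $q\colon(\ty,\tilde{y}_0)\to(X,x_0)$ and a topological embedding $\varphi\colon\tx\hookrightarrow\ty$ with $q\circ\varphi=p$, and we may take $\tilde{y}_0=\varphi(\tilde{x}_0)$. By Theorem \ref{semicovering map}, $q$ is a local homeomorphism with UPLP and PLP, so Theorem \ref{Homotopy} and Corollary \ref{cor homotopy} apply to $q$. Now let $f$ be a path in $\tx$ with $\beta:=p\circ f$ null homotopic in $X$, i.e.\ there is $F\colon\beta\simeq c_{x_1}$ rel $\dot I$, where $c_{x_1}$ denotes the constant path at some $x_1\in X$. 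Because a homotopy rel $\dot I$ fixes both ends, first of all $\beta(0)=\beta(1)=x_1$. Put $g:=\varphi\circ f$, a path in $\ty$ with $q\circ g=\beta$; thus $g$ is the $q$-lift of $\beta$ starting at $g(0)=\varphi(f(0))$. Since $q$ is a local homeomorphism and $c_{x_1}$ is constant, the $q$-lift of $c_{x_1}$ starting at $\varphi(f(0))$ is the constant path $c_{\varphi(f(0))}$. Applying Corollary \ref{cor homotopy} to $q$, to the pair of paths $\beta,c_{x_1}$ (which share both endpoints $x_1$), and to their lifts $g$ and $c_{\varphi(f(0))}$ (which share the initial point $\varphi(f(0))$), we obtain a lifted homotopy $\tilde F\colon g\simeq c_{\varphi(f(0))}$ rel $\dot I$. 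Hence $g(1)$ is held fixed throughout $\tilde F$ and equals the terminal point of $c_{\varphi(f(0))}$, so $g(1)=\varphi(f(0))$. On the other hand $g(1)=\varphi(f(1))$, whence $\varphi(f(1))=\varphi(f(0))$, and the injectivity of the embedding $\varphi$ gives $f(0)=f(1)$, which is $(\bigstar)$.

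The one point needing care is that we cannot simply quote ``a null homotopic loop lifts to a loop'', since we do not yet know that $g=\varphi\circ f$ is a loop — that is precisely the conclusion. The argument therefore has to go through Corollary \ref{cor homotopy} applied to $q$, after checking the two easy preliminaries that $\beta$ is genuinely a loop and that the constant path lifts to a constant path; the rest is routine basepoint bookkeeping plus the injectivity of $\varphi$.
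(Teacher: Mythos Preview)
Your proof is correct and follows the same approach as the paper for item (2): push $f$ forward along the embedding $\varphi$, lift the null-homotopy through the semicovering via Corollary \ref{cor homotopy}, and conclude using the injectivity of $\varphi$. For item (1) you correctly note it is immediate from Definition \ref{subsemicovering}; the paper instead re-derives it from the factorization $p=q\circ\varphi$ by pulling back a chart for $q$ through $\varphi$, which is redundant given the definition but not incorrect.
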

\begin{proof}
Let $ p' :(\tilde{Y}, \tilde{y}_0) \to (X, x_0)$ be a semicovering map which is an extension of $p$ via an embedding $\varphi : \tx \to \tilde{Y}$, i.e, $p' \circ \varphi =p$. Consider $\tilde{x}$ to be an arbitrary element of $\tx$. Since $p'$ is a local homeomorphism, there exists an open neighborhood $W$ of $ \varphi(\tilde{x})$ such that $p'|_W: W \to p'(W)$ is a homeomorphism. Since $\varphi$ is an embedding, $\varphi^{-1}(W)$ is an open neighborhood of $\tilde{x}$ and $p|_{\varphi^{-1}(W)}: \varphi^{-1}(W) \to p(\varphi^{-1}(W))$ is a homeomorphism. Hence $p :(\tx, \tilde{x}_0) \to (X, x_0)$ is a local homeomorphism.
If $f$ is a path in $\tx$ and $p \circ f$ is null homotopic, then by the definition of a semicovering map, there exists $\tilde{f}: I \to \tilde{Y} $ with starting point $\varphi( f (0))$ such that $p' \circ \tilde{f} = p \circ f $. By Corollary \ref{cor homotopy}, $\tilde{f}$ is null homotopic in $ \tilde{Y}$ since $\tilde{f}$ is a lifting of $p \circ f$. Thus $\tilde{f}$ is a loop. Also $\varphi (f)= \tilde{f}$ since $\tilde{f}$ and $\varphi (f)$ are two liftings of $p \circ f$ with starting point $\varphi( f (0))$. Since $\varphi$ is an embedding and $\varphi (f)= \tilde{f}$, $f$ is a loop.
\end{proof}

In the following, we are going to find a sufficient condition for extending a local homeomorphism to a semicovering map. For this purpose first, note that Steinberg in \cite[Theorem 4.6]{sting} presented a necessary and sufficient condition for a local homeomorphism $p: \tx \to X $ to be subcovering.  More precisely, he proved that a continuous map $p :\tx \to X $ of locally path connected and semilocally simply connected spaces is subcovering if and only if $p:\tx \to X$ is a local homeomorphism and any path $f$ in $\tx$ with $p \circ f$ null homotopic (in $X$) is closed, i.e, $f(0) = f(1)$.
We will show that the latter condition on a local homeomorphism $p :(\tx, \tilde{x}_0) \to (X, x_0) $ is a sufficient condition for $p$ to be subsemicovering provided that $p_* (\pi_1 (\tx, \tilde{x}_0))$ is an open subgroup of the quasitopological fundamental group $\pi_1^{qtop}(X, x_0)$.

\begin{thm} \label{asli}
Let $p :(\tx, \tilde{x}_0) \to (X, x_0) $ be a map such that $p_* (\pi_1 (\tx, \tilde{x}_0))$ is an open subgroup of $\pi_1^{qtop}(X, x_0)$. Then $p$ is a subsemicovering map if and only if
\begin{enumerate}
\item
$p :(\tx, \tilde{x}_0) \to (X, x_0)$ is a local homeomorphism;
\item
if $f$ is a path in $\tx$ with $p \circ f$ null homotopic (in $X$), then $f(0) = f(1)$.
\end{enumerate}
\end{thm}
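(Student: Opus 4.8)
The plan is to dispose of the forward implication at once and then build the semicovering extension by hand for the converse. If $p$ is subsemicovering, then $p$ is a local homeomorphism and $(\bigstar)$ holds by Theorem~\ref{lazem}, with the openness hypothesis playing no role there; so the real content is the converse. Assume, then, that $p$ is a local homeomorphism satisfying $(\bigstar)$ and that $H:=p_*(\pi_1(\tx,\tilde x_0))$ is open in $\pi_1^{qtop}(X,x_0)$. By Theorem~\ref{semi} there is a semicovering $q:(\ty,\tilde y_0)\to(X,x_0)$ with $q_*(\pi_1(\ty,\tilde y_0))=H$. Define $\varphi:\tx\to\ty$ by the usual monodromy recipe: for $\tilde x\in\tx$ choose a path $\gamma$ in $\tx$ from $\tilde x_0$ to $\tilde x$ (possible since $\tx$ is path connected), lift $p\circ\gamma$ through $q$ to the unique path starting at $\tilde y_0$, and let $\varphi(\tilde x)$ be its endpoint. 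Well-definedness is routine: if $\gamma_1,\gamma_2$ are two such paths, the loop $\gamma_1\ast\overline{\gamma_2}$ at $\tilde x_0$ has $p$-image representing a class in $H=q_*(\pi_1(\ty,\tilde y_0))$, so by the Lifting Criterion (Theorem~\ref{criterion}) that loop lifts through $q$ to a loop at $\tilde y_0$, which forces the $q$-lifts of $p\circ\gamma_1$ and $p\circ\gamma_2$ to share an endpoint.

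By construction $q\circ\varphi=p$. For continuity at $\tilde x\in\tx$, pick an open $V\ni\varphi(\tilde x)$ on which $q$ restricts to a homeomorphism onto the open set $q(V)$, and then, using that $p$ is a local homeomorphism and $\tx$ is locally path connected, a path-connected open $W\ni\tilde x$ with $p|_W$ a homeomorphism onto $p(W)\subseteq q(V)$. For $\tilde x'\in W$, concatenating $\gamma$ with a path $\delta$ inside $W$ from $\tilde x$ to $\tilde x'$ and lifting $p\circ(\gamma\ast\delta)$ through $q$, the tail lift of $p\circ\delta$ starting at $\varphi(\tilde x)$ is simply $(q|_V)^{-1}\circ(p\circ\delta)$, which stays in $V$; hence $\varphi(\tilde x')=(q|_V)^{-1}(p(\tilde x'))$, so $\varphi(W)\subseteq V$ and, more precisely, $\varphi|_W=(q|_V)^{-1}\circ p|_W$. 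Thus $\varphi$ is continuous and is itself a local homeomorphism, in particular an open map onto its image.

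The heart of the proof is injectivity, and this is where $(\bigstar)$ is used. Suppose $\varphi(\tilde x_1)=\varphi(\tilde x_2)$, with $\gamma_i$ the path used to compute $\varphi(\tilde x_i)$. Then $p(\tilde x_1)=q(\varphi(\tilde x_1))=q(\varphi(\tilde x_2))=p(\tilde x_2)$, and reversing the well-definedness computation shows $[(p\circ\gamma_1)\ast\overline{p\circ\gamma_2}]\in q_*(\pi_1(\ty,\tilde y_0))=H=p_*(\pi_1(\tx,\tilde x_0))$. Choose a loop $c$ in $\tx$ based at $\tilde x_0$ with $[p\circ c]=[(p\circ\gamma_1)\ast\overline{p\circ\gamma_2}]$ in $\pi_1(X,x_0)$, and set $h:=\overline{c}\ast\gamma_1$, a path in $\tx$ from $\tilde x_0$ to $\tilde x_1$. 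A short path-homotopy manipulation gives $p\circ h\simeq p\circ\gamma_2$ rel $\dot{I}$, hence $p\circ(\overline{h}\ast\gamma_2)\simeq\overline{p\circ\gamma_2}\ast(p\circ\gamma_2)$, which is null homotopic in $X$. Applying $(\bigstar)$ to the path $\overline{h}\ast\gamma_2$ in $\tx$ forces it to be closed, i.e. $\tilde x_1=\tilde x_2$. Therefore $\varphi$ is an injective, continuous, open map, that is, a topological embedding with $q\circ\varphi=p$, so $p$ is a subsemicovering map.

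I expect the only delicate step to be this injectivity argument. The passage to $q$ from the openness of $H$ is precisely what Theorem~\ref{semi} supplies, and the well-definedness and continuity arguments are the standard bookkeeping from covering-space theory. What cannot be done naively is injectivity: it does not suffice to note that the relevant loop lifts to a loop in $\ty$ (that holds for any semicovering, not only the universal one), so one must convert the membership of its class in $p_*(\pi_1(\tx,\tilde x_0))$ into an honest path in $\tx$ whose $p$-image is null homotopic --- the choice of $c$ and $h$ above --- before $(\bigstar)$ can be brought to bear; getting that conversion and the accompanying homotopy computation exactly right is the one place where real care is needed.
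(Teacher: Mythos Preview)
Your proof is correct and follows essentially the same route as the paper: obtain the semicovering $q$ from Theorem~\ref{semi}, produce $\varphi$ with $q\circ\varphi=p$, and then prove injectivity via $(\bigstar)$ and openness by showing $\varphi$ is a local homeomorphism. The only cosmetic difference is that the paper invokes the Lifting Criterion (Theorem~\ref{criterion}) to get $\varphi$ in one line, whereas you construct $\varphi$ explicitly by the monodromy recipe and verify well-definedness and continuity by hand---but that is exactly the content of the lifting criterion, and your injectivity argument (choosing $c$ and applying $(\bigstar)$ to $\overline{h}\ast\gamma_2$) matches the paper's step for step.
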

\begin{proof}
 The necessity follows by Theorem $\ref{lazem}$. For sufficiency, using Theorem $\ref{semi}$, let $ p' :(\tilde{Y}, \tilde{y}_0) \to (X, x_0)$ be the semicovering map associated to the open subgroup $p_* (\pi_1 (\tx, \tilde{x}_0))$. Since semicoverings have lifting criterion (see Theorem \ref{criterion}), by lifting $p$ to $(\tilde{Y}, \tilde{y}_0)$, we obtain a mapping $\varphi : (\tx, \tilde{x}_0) \to (\tilde{Y}, \tilde{y}_0)$ such that $p'\circ\varphi=p$. First, we show that $\varphi$ is injective. Suppose $\varphi(\tilde{x}_1) = \varphi(\tilde{x}_2)$. Let $f_j : (I, 0, 1) \to (\tx, \tilde{x}_0, \tilde{x}_j ), \ j = 1, 2,$ be two paths. Note that we use here notation $[f_j], \ j = 1, 2$ for the homotopy classes in the fundamental groupoid (see \cite[Section 1.7]{s}). Then $\varphi([f_2])\varphi([f_1^{-1}]) \in \pi_{1}(\tilde{Y}, \tilde{y}_0)$ so $p'(\varphi([f_2])\varphi([f_1^{-1}] ) \in  p'_* (\pi_{1}(\tilde{Y}, \tilde{y}_0))$ and $ p'(\varphi[f_2])\varphi([f_1^{-1}] ) =  p([ f_2])p([ f_1^{-1}]) $ since $p' \circ \varphi = p$. Note that $p_* (\pi_1 (\tx, \tilde{x}_0))=  p'_* (\pi_{1}(\tilde{Y}, \tilde{y}_0))$ so there is a loop $f$ at $x_0$ such that
$$p([f]) = p([f_2])p([f_1^{-1}]).$$
Therefore
$$p([f_2^{-1}(ff_1)])=p([f_2^{-1}]) p([f_2])p([f_1^{-1}])p([f_1]) =[1_{p(\tilde{x}_2)}].$$
Hence, by assumption, $f_2^{-1}(ff_1)$ is a loop (at $\tilde{x}_2$), whence $\tilde{x}_1 = \tilde{x}_2$.
It remains to show $\varphi : (\tx, \tilde{x}_0) \to (\tilde{Y}, \tilde{y}_0)$ is an open map. To prove this, we show that $\varphi$ is a local homeomorphism since every local homeomorphism is an open map. It is enough to show that for an arbitrary element $\tilde{x}$ of $\tx$, there exists an open neighborhood $W_{ \tilde{x}}$ such that $\varphi|_{W_{ \tilde{x}}}: W_{ \tilde{x}} \to \varphi(W_{ \tilde{x}})$ is a homeomorphism. If $V_{ \tilde{x}}$ is an open neighborhood obtained from local homeomorphism $p$ at $ \tilde{x}$ and $U_{\varphi (\tilde{x})}$ is an open neighborhood obtained from local homeomorphism $p'$ at $\varphi (\tilde{x})$, then $W_{ \tilde{x}} = V_{ \tilde{x}}\bigcap \varphi^{-1}(U_{ \varphi (\tilde{x})})$. Since $ \varphi$ is continuous, $ W_{ \tilde{x}}$ is open in $ \tx$. Also $\varphi|_{W_{ \tilde{x}}} = p|_{W_{ \tilde{x}}}\circ {p'^{-1}}|_{U_{\varphi (\tilde{x})}}$ hence $\varphi|_{W_{ \tilde{x}}}$ is a homeomorphism. Thus $\varphi$ is a local homeomorphism.
\end{proof}

The following corollary is a consequence of Theorems \ref{asli} and \ref{semi}.
\begin{cor} \label{C asli}
A map $p :(\tx, \tilde{x}_0) \to (X, x_0) $ is a full subsemicovering map if and only if
\begin{enumerate}
\item
$p :(\tx, \tilde{x}_0) \to (X, x_0)$ is a local homeomorphism;
\item
if $f$ is a path in $\tx$ with $p \circ f$ null homotopic (in $X$), then $f(0) = f(1)$;
\item
$p_* (\pi_1 (\tx, \tilde{x}_0))$ is an open subgroup of $\pi_1^{qtop}(X, x_0)$.
\end{enumerate}
\end{cor}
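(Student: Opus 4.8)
The plan is to obtain the corollary as a direct consequence of Theorem~\ref{asli} together with Theorem~\ref{lazem}, Theorem~\ref{semi}, and Corollary~\ref{c semi}: the adjective \emph{full} only strengthens \emph{subsemicovering} by requiring that the extending semicovering realize an \emph{equality} of fundamental-group images rather than merely an inclusion, and this extra bookkeeping is already visible inside the proof of Theorem~\ref{asli}.

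\textbf{Necessity.} Assume $p$ is a full subsemicovering map. In particular $p$ is subsemicovering, so Theorem~\ref{lazem} immediately yields conditions (1) and (2). For (3), fix a semicovering extension $q:(\ty,\tilde{y}_0)\to(X,x_0)$ with embedding $\varphi$ satisfying $q_*(\pi_1(\ty,\tilde{y}_0))=p_*(\pi_1(\tx,\tilde{x}_0))$. The left-hand side is by definition a semicovering subgroup of $\pi_1(X,x_0)$, hence open in $\pi_1^{qtop}(X,x_0)$ by Corollary~\ref{c semi}; since it coincides with $p_*(\pi_1(\tx,\tilde{x}_0))$, condition (3) follows.

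\textbf{Sufficiency.} Assume (1), (2) and (3). By (3) the hypothesis of Theorem~\ref{asli} is satisfied, and combined with (1) and (2) that theorem tells us $p$ is a subsemicovering map; it remains to upgrade this to \emph{full}. Here I would invoke the specific extension constructed in the proof of Theorem~\ref{asli}: the semicovering $p':(\ty,\tilde{y}_0)\to(X,x_0)$ appearing there is the one associated by Theorem~\ref{semi} to the open subgroup $H:=p_*(\pi_1(\tx,\tilde{x}_0))$, so that $p'_*(\pi_1(\ty,\tilde{y}_0))=H=p_*(\pi_1(\tx,\tilde{x}_0))$; since $\varphi$ is an embedding with $p'\circ\varphi=p$, this exhibits $p$ as a full subsemicovering map.

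The argument is essentially a reassembly of earlier results, so there is no serious obstacle. The one point deserving a sentence of care is that the semicovering furnished by Theorem~\ref{semi} for the subgroup $H$ has fundamental-group image \emph{exactly} $H$, and not merely a conjugate or an overgroup; this holds because the correspondence in Theorem~\ref{semi} is a bijection onto the semicovering subgroups of $\pi_1(X,x_0)$ and because $\ty$ is path connected, and it is precisely this feature that turns ``subsemicovering under an openness hypothesis'' automatically into ``full''.
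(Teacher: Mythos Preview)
Your proposal is correct and follows essentially the same route as the paper: necessity of (1) and (2) via Theorem~\ref{lazem}, necessity of (3) via Corollary~\ref{c semi}, and sufficiency by re-running the construction in the proof of Theorem~\ref{asli}. The paper's sufficiency argument is terser (it simply says ``similar to the proof of Theorem~\ref{asli}''), while you spell out explicitly why the semicovering $p'$ built there satisfies $p'_*(\pi_1(\ty,\tilde{y}_0))=p_*(\pi_1(\tx,\tilde{x}_0))$, which is exactly the point needed for fullness.
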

\begin{proof}
 Since every full subsemicovering map is a subsemicovering map, the necessity of conditions $(1)$ and $(2)$ are obtained by Theorem $\ref{lazem}$. To prove condition $(3)$, let $p$ can be extended to a semicovering map $q: (\ty, \tilde{y}_0) \to (X, x_0)$ such that $ p_*(\pi_1(\tilde{X}, \tilde{x}_0)) =  q_*(\pi_1(\tilde{Y}, \tilde{y}_0))$. Hence $p_*(\pi_1(\tilde{X}, \tilde{x}_0))$ is open in $\pi_1^{qtop}(X, x_0)$ since $q_*(\pi_1(\tilde{Y}, \tilde{y}_0))$ is open in $\pi_1^{qtop}(X, x_0)$ by Corollary $\ref{c semi}$. Sufficiency is obtained similar to the proof of Theorem $\ref{asli}$.
\end{proof}
The following corollary can be concluded by the classification of connected covering spaces of $X$, Theorem $\ref{cov}$, and Theorem \ref{asli}.

\begin{cor} \label{C asli}
A map $p :(\tx, \tilde{x}_0) \to (X, x_0) $ is a full subcovering map if and only if
\begin{enumerate}
\item
$p :(\tx, \tilde{x}_0) \to (X, x_0)$ is a local homeomorphism;
\item
if $f$ is a path in $\tx$ with $p \circ f$ null homotopic (in $X$), then $f(0) = f(1)$;
\item
$p_* (\pi_1 (\tx, \tilde{x}_0))$ contains an open normal subgroup of $\pi_1^{qtop}(X, x_0)$.
\end{enumerate}
\end{cor}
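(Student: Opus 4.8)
The plan is to run the argument of Theorem~\ref{asli} almost verbatim, replacing the semicovering associated to an open subgroup (Theorem~\ref{semi}) by the connected covering associated to a subgroup with open core (Theorem~\ref{cov}), and replacing the open-subgroup hypothesis by the open-core hypothesis. The bridge between the two is the elementary fact that in the quasitopological group $\pi_1^{qtop}(X,x_0)$ a subgroup containing an open subgroup is itself open: each left translation is a homeomorphism, so $H=\bigcup_{h\in H}hN$ is open whenever $N\le H$ is open. Consequently $p_*(\pi_1(\tx,\tilde{x}_0))$ contains an open normal subgroup of $\pi_1^{qtop}(X,x_0)$ if and only if the core $\mathrm{Core}\big(p_*(\pi_1(\tx,\tilde{x}_0))\big)$ is open in $\pi_1^{qtop}(X,x_0)$: being the largest normal subgroup contained in $p_*(\pi_1(\tx,\tilde{x}_0))$, the core absorbs any open normal subgroup lying inside $p_*(\pi_1(\tx,\tilde{x}_0))$, and it is itself such a subgroup.

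For necessity, suppose $p$ is full subcovering, witnessed by a covering map $q:(\ty,\tilde{y}_0)\to(X,x_0)$ and an embedding $\varphi:\tx\hookrightarrow\ty$ with $q\circ\varphi=p$ and $p_*(\pi_1(\tx,\tilde{x}_0))=q_*(\pi_1(\ty,\tilde{y}_0))$. Every covering map is a semicovering map, so $p$ is subsemicovering and $(1)$, $(2)$ follow at once from Theorem~\ref{lazem}. Replacing $\ty$ by the path component of $\tilde{y}_0$ (which changes neither $q_*(\pi_1(\ty,\tilde{y}_0))$ nor the fact that $\varphi(\tx)$ lies inside it), we may assume $\ty$ is a connected covering space of $X$; then Theorem~\ref{cov} gives that $q_*(\pi_1(\ty,\tilde{y}_0))$ has open core, and that core is an open normal subgroup of $\pi_1^{qtop}(X,x_0)$ contained in $q_*(\pi_1(\ty,\tilde{y}_0))=p_*(\pi_1(\tx,\tilde{x}_0))$, which is condition $(3)$.

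For sufficiency, assume $(1)$--$(3)$ and set $H=p_*(\pi_1(\tx,\tilde{x}_0))$. By the remark above $\mathrm{Core}(H)$ is open, so by Theorem~\ref{cov} the conjugacy class of $H$ is realized by a connected covering space of $X$, and by choosing its basepoint so that the induced subgroup is exactly $H$ we obtain a covering map $q:(\ty,\tilde{y}_0)\to(X,x_0)$ with $q_*(\pi_1(\ty,\tilde{y}_0))=H$. A covering map is a semicovering map (Theorem~\ref{semicovering map}) and $p_*(\pi_1(\tx,\tilde{x}_0))=H\subseteq q_*(\pi_1(\ty,\tilde{y}_0))$, so the lifting criterion, Theorem~\ref{criterion}, produces $\varphi:(\tx,\tilde{x}_0)\to(\ty,\tilde{y}_0)$ with $q\circ\varphi=p$. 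Then, exactly as in the proof of Theorem~\ref{asli}, condition $(2)$ forces $\varphi$ to be injective while condition $(1)$ and the fact that $q$ is a local homeomorphism force $\varphi$ to be a local homeomorphism; an injective local homeomorphism is an open topological embedding, so $q\circ\varphi=p$ with $q$ a covering map and $q_*(\pi_1(\ty,\tilde{y}_0))=p_*(\pi_1(\tx,\tilde{x}_0))$ exhibits $p$ as a full subcovering map.

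I expect the main obstacle to be purely bookkeeping: transporting Theorem~\ref{cov}, which is phrased for \emph{conjugacy classes} of subgroups with open core, down to a statement about the specific subgroup $H$ (realized on the nose after an appropriate choice of basepoint in the covering space), together with the translation between ``contains an open normal subgroup'' and ``has open core'' via the quasitopological group structure. The injectivity-and-embedding half of the argument is word-for-word the same as in the proof of Theorem~\ref{asli} and needs nothing new.
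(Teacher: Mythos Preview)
Your proposal is correct and follows precisely the route the paper indicates: the paper's ``proof'' is the one-line remark that the corollary follows from Theorem~\ref{cov} together with Theorem~\ref{asli}, and you have simply unpacked that remark (translating ``contains an open normal subgroup'' into ``has open core'' via the quasitopological-group structure, invoking Theorem~\ref{cov} to produce the covering $q$ with $q_*\pi_1=H$, and then reusing the embedding argument of Theorem~\ref{asli} verbatim). Nothing further is needed.
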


We need the following lemma for the next example.
\begin{lem}\label{hamid}
Let $p:\tx \to X$ be a local homeomorphism. Suppose that $\tx$ is Hausdorff and every null homotopic loop $\alpha$ in $X$ is of the form $\Pi_{i=1}^{n} \alpha_i$, where $$\alpha_i (t)= \begin{cases} (f_i \circ \lambda_i)(t) & t \in [0, a_i]
\\ (f^{-1}_i \circ \gamma_i)(t)  &  t \in [a_i, 1], \\
 \end{cases}$$
in which $0\leq a_i\leq 1$, $f_i$ is a path in $X$, $\lambda_i: [0, a_i] \to [0, 1]$ is defined by $\lambda_i(t) = \frac{t}{a_i}$ and $\gamma_i: [a_i, 1] \to [0, 1]$ is defined by $\gamma_i(t) = \frac{t- a_i}{1- a_i}$,  for every $i \in \NN$. Then $p$ has the condition $(\bigstar)$ in Theorem \ref{lazem}.
\end{lem}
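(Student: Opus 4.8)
The plan is to combine two ingredients: the Hausdorffness of $\tx$, which forces the local homeomorphism $p$ to enjoy the unique path lifting property, and the structural hypothesis, which lets us lift the null homotopic loop $p\circ f$ by breaking it into ``spurs'' $\alpha_i$, each of which is a path $f_i$ immediately retraced by $f_i^{-1}$. I would begin by isolating the first ingredient as a standalone observation: \emph{if $p:\tx\to X$ is a local homeomorphism and $\tx$ is Hausdorff, then any two paths $\tilde g_1,\tilde g_2:I\to\tx$ with $p\circ\tilde g_1=p\circ\tilde g_2$ and $\tilde g_1(0)=\tilde g_2(0)$ coincide}; in particular $p$ has UPLP. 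The proof is the usual connectedness argument: the set $A=\{u\in I:\tilde g_1(u)=\tilde g_2(u)\}$ is open by the local-homeomorphism argument already used in the proof of Lemma \ref{SUPLP} (near a common value pick a neighbourhood on which $p$ is injective and pull it back along both maps), while $A=(\tilde g_1,\tilde g_2)^{-1}(\Delta_{\tx})$ is closed because $\tx$ is Hausdorff; since $0\in A$ and $I$ is connected, $A=I$.

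Next comes the reduction to a single factor. Let $f$ be a path in $\tx$ with $p\circ f$ null homotopic in $X$; then $p\circ f$ is in particular a loop, so by hypothesis $p\circ f=\prod_{i=1}^{n}\alpha_i$ with the $\alpha_i$ as in the statement. Choose the corresponding partition $0=s_0<s_1<\cdots<s_n=1$, so that $(p\circ f)(s)=\alpha_i\bigl(\tfrac{s-s_{i-1}}{s_i-s_{i-1}}\bigr)$ on $[s_{i-1},s_i]$, and set $g_i(u)=f\bigl(s_{i-1}+u(s_i-s_{i-1})\bigr)$ for $u\in I$. Each $g_i$ is a path in $\tx$ with $p\circ g_i=\alpha_i$, $g_i(0)=f(s_{i-1})$ and $g_i(1)=f(s_i)$, hence $g_i(1)=g_{i+1}(0)$, $g_1(0)=f(0)$ and $g_n(1)=f(1)$. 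It therefore suffices to prove that each $g_i$ is a loop, since then $f(0)=g_1(0)=g_1(1)=g_2(0)=\cdots=g_n(1)=f(1)$.

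Finally, fix $i$ and write $a=a_i$ and $f_\ast=f_i$, taking the essential case $0<a<1$. Define $h^{+}(u)=g_i(ua)$ and $h^{-}(u)=g_i\bigl(a+u(1-a)\bigr)$ for $u\in I$. A direct computation from the formula for $\alpha_i$ (that is, from $\lambda_i,\gamma_i$) gives $p\circ h^{+}=f_\ast$ and $p\circ h^{-}=f_\ast^{-1}$, together with $h^{+}(1)=g_i(a)=h^{-}(0)$, $h^{+}(0)=g_i(0)$ and $h^{-}(1)=g_i(1)$. The reverse path $(h^{+})^{-1}$ then satisfies $p\circ(h^{+})^{-1}=f_\ast^{-1}=p\circ h^{-}$ and $(h^{+})^{-1}(0)=h^{+}(1)=g_i(a)=h^{-}(0)$, so by the unique path lifting property established above $(h^{+})^{-1}=h^{-}$ throughout $I$. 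Evaluating at $u=1$ gives $g_i(0)=(h^{+})^{-1}(1)=h^{-}(1)=g_i(1)$, so $g_i$ is a loop; hence $f(0)=f(1)$, i.e.\ $p$ satisfies condition $(\bigstar)$.

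I expect the only real subtlety to be bookkeeping: checking, through the reparametrisations $\lambda_i,\gamma_i$ and the partition points $s_j$, that the projected identities $p\circ h^{+}=f_i$ and $p\circ h^{-}=f_i^{-1}$ hold on the nose, and keeping in mind that it is the Hausdorff hypothesis — not any assumed lifting property of $p$ — that supplies UPLP (which is precisely the ingredient taking the place of homotopy lifting here, since $p$ is not assumed to have PLP). If the decomposition $\prod_{i=1}^{n}\alpha_i$ were meant to allow infinitely many factors, one would instead apply the finite argument to each initial segment $\prod_{i=1}^{m}\alpha_i$ and pass to the limit using continuity of $f$.
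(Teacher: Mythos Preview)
Your proof is correct and follows essentially the same route as the paper. The paper also reduces without loss of generality to a single factor $\alpha_i$ and runs a clopen argument on $[0,a_i]$ comparing the lift on the first half to the lift on the second half (via a reparametrisation $\eta(t)=t+1-t/a_i$ that plays the role of your $u\mapsto a+u(1-a)$); the only organisational difference is that you isolate the Hausdorff\,$+$\,local homeomorphism $\Rightarrow$ UPLP step as a standalone observation and then invoke it, whereas the paper runs that same open--closed argument inline for the specific pair of paths $\tilde\alpha$ and $\tilde\alpha\circ\eta$.
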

\begin{proof}
Let $\tilde{\alpha}$ be a path in $\tx$ such that $p\circ\tilde{\alpha} = \alpha $ is null homotopic in $X$. By the hypothesis, without loss of generality we can assume that
 $$\alpha (t)= \begin{cases} (f \circ \lambda)(t) & t \in [0, a]
\\ (f^{-1} \circ \gamma)(t)  &  t \in [a, 1], \\
 \end{cases}$$
  where $f$ is a path in $X$, $\lambda: [0, a] \to [0, 1]$ defined by $\lambda(t) = \frac{t}{a}$, $\gamma: [a, 1] \to [0, 1]$ defined by $\gamma(t) = \frac{t- a}{1- a}$ and $a \in [0, 1]$. Put $A = \{t \in [0, a]| \tilde{\alpha}(t)= (\tilde{\alpha} \circ \eta)(t) \}$ where $\eta: [0,a] \to [a, 1]$ is defined by $\eta(t) = t + 1 - \frac{t}{a}$. We show that $A$ is a nonempty clopen subset of $[0, a]$ which implies that $ A = [0, a]$. Clearly $A$ is nonempty since for $t =a $ we have $ \tilde{\alpha}(a) = (\tilde{\alpha} \circ \eta)(a)$. Let $b \in A$, then $ \tilde{\alpha}(b)=(\tilde{\alpha} \circ \eta)(b)=c $. Since $p$ is a local homeomorphism, there exists $V \subseteq \tx $ such that $ c \in V$ and $p|_V:V \to p(V) $ is a homeomorphism. Put $ W = (\tilde{\alpha}^{-1}(V)\cap (\tilde{\alpha} \circ \eta)^{-1}(V))\cap [0, a]$, then $W$ is an open subset of $[0, a]$. If $ w \in W$, then $(p \circ \tilde{\alpha})(w) = \alpha (w) = (f \circ \lambda)(w) = f (\frac{w}{a}) $. Also since $(w + 1 - \frac{w}{a}) \in [a, 1]$, we have $(p \circ (\tilde{\alpha} \circ \eta))(w) = \alpha (\eta(w)) = \alpha (w + 1 - \frac{w}{a}) = (f^{-1} \circ \gamma)(w + 1 - \frac{w}{a}) = f^{-1} (\frac{(w + 1 - \frac{w}{a}) - a}{1- a})=f (\frac{w}{a}) $. Hence $(p \circ \tilde{\alpha})(w)= f (\frac{w}{a})= (p \circ (\tilde{\alpha} \circ \eta))(w)$. Since $p$ is one to one on $V$ and $\tilde{\alpha}(w), (\tilde{\alpha} \circ \eta)(w) \in V  $, we have $\tilde{\alpha}(w)= (\tilde{\alpha} \circ \eta)(w)$. Hence $ W \subseteq A$ and therefore $A$ is open. Now we show that $ A$ is closed. Let $ b \in [0, a] \setminus A$ then $\tilde{\alpha}(b) \neq (\tilde{\alpha} \circ \eta)(b)$. Since $\tx$ is Hausdorff and $p$ is a local homeomorphism, there exist open neighborhoods $V_{\tilde{\alpha}(b)} $ of $\tilde{\alpha}(b)$ and $V_{(\tilde{\alpha} \circ \eta)(b)} $ of $(\tilde{\alpha} \circ \eta)(b)$ such that $ V_{\tilde{\alpha}(b)} \cap V_{(\tilde{\alpha} \circ \eta)(b)} = \phi$ and $p|_{V_{\tilde{\alpha}(b)}} : V_{\tilde{\alpha}(b)} \to p (V_{\tilde{\alpha}(b)}), \ p|_{V_{(\tilde{\alpha} \circ \eta)(b)}} : V_{(\tilde{\alpha} \circ \eta)(b)} \to p (V_{(\tilde{\alpha} \circ \eta)(b)})$ are homeomorphisms. Put $W = (\tilde{\alpha}^{-1}(V_{\tilde{\alpha}(b)}) \cap g^{-1}(V_{g(b)}))\cap [0, a]$, then $W$ is an open subset of $[0, a]$ and $ b \in W$ and $W \subseteq [0, a] \setminus A$ which implies that $A$ is closed.
  Hence $0\in A $ which implies that $\tilde{\alpha}(0)= \tilde{\alpha} \circ \eta (0)$. Thus $\tilde{\alpha} (0) = \tilde{\alpha}(1)$ and so $p$ has the condition $(\bigstar)$.
\end {proof}

The following example shows that the condition $(\bigstar)$ is not a sufficient condition for $p$ to be subsemicovering. Hence we can not omit openness of $p_* (\pi_1 (\tx, \tilde{x}_0))$  from the hypotheses of Theorem \ref{asli}.
\begin{ex}\label{ex222}
Let $p: \tx \to X =\he$ be the local homeomorphism introduced in Example \ref{st ex}. By using Lemma \ref{hamid}, $p$ has the condition $(\bigstar)$ . Let $\alpha : I \to X$ be a loop introduced in Example \ref{st ex}. The path $\alpha$ has no lifting with starting point $(\frac{1}{2}, 0, 0)$ and the incomplete lifting of $\alpha$ with starting point $(\frac{1}{2}, 0, 0)$ is $\tilde{\alpha}: [0,1) \to \tx$, introduced in Example \ref{st ex}. $\tilde{\alpha}$ does not have any strong neighborhood. Therefore $p$ does not have strong UPLP and so it is not subsemicovering (see Theorem \ref{strong UPLP}).
\end{ex}

If $p :(\tx, \tilde{x}_0) \to (X, x_0)$ is a local homeomorphism, then openness of $p_* (\pi_1 (\tx, \tilde{x}_0))$ is not a necessary condition for $p$ to be a subsemicovering map. The following example gives a subsemicovering map $p :\tx  \to \he$ in which $p_* (\pi_1 (\tx, \tilde{x}_0))$ is not an open subgroup of $\pi_{1}^{qtop}(\he)$.
\begin{ex}\label{ex1}
Let $q:\hat{X} \to X $ be the local homeomorphism introduced in Example \ref{ex111}. We recall that $X = \he$ and $q_* (\pi_1 (\hat{X}, \hat{x}_0))= \{ 1\} \leq \pi_1(\he)$. It is known that $\{ 1\}$ is not open in $\pi_{1}^{qtop}(\he)$ since $\he$ is not semilocally simply connected (see Example \ref{ex111}). Hence $q_* (\pi_1 (\hat{X}, \hat{x}_0))$ is not open in $\pi_{1}^{qtop}(\he)$ but $q$ is a subsemicovering map.
\end{ex}

If $p :(\tx, \tilde{x}_0) \to (X, x_0)$ is a local homeomorphism with condition $(\bigstar)$ and $p_* (\pi_1 (\tx, \tilde{x}_0))= \{1\}$, then $p$ is not necessarily a subsemicovering map. See the following example.
\begin{ex}\label{ex200}
Let $p: \tx \to X = \he$ be the local homeomorphism introduced in Example \ref{st ex}. Put $\hat{X} = \tx \setminus \{(r, s, 0) \in \RR^3 | r \in  \{(1- \frac{1}{i+1})| i \in \NN\}, \  s \in (0, 1]   \}$, then $\hat{X}$ is path connected. It is easy to see that every loop in $ \hat{X}$ is null homotopic. Also, $q = p|_{\hat{X}} :\hat{X} \to X$ is a local homeomorphism with $q_* (\pi_1 (\hat{X}, \hat{x}_0))= \{ 1\} \leq \pi_1(X)$. By using Lemma \ref{hamid}, $q$ has the condition $(\bigstar)$ . Let $\alpha : I \to X$ be a loop defined by $$\alpha (t)= \begin{cases} (0, 0) & t \in [0, \frac{1}{2}] \cup \{1\}
\\ \frac{1}{i}(1 + \cos( \frac{2 \pi}{1-t}), \sin( \frac{2 \pi}{1-t}))  &  1- \frac{1}{i} \leq t \leq 1- \frac{1}{i+1},\ i\in\NN \setminus \{1\}. \\
 \end{cases}$$
 The loop $\alpha$ has no lifting with starting point $(\frac{1}{2}, 0, 0)$ and the incomplete lifting of $\alpha$ with starting point $(\frac{1}{2}, 0, 0)$ is $\hat{\alpha}: [0,1) \to \hat{X}$ defined by $$\hat{\alpha}(t)= \begin{cases} (\frac{1}{2}, 0, 0) & t \in [0, \frac{1}{2}]
\\ (t, 0, 0)  &  t \in [\frac{1}{2}, 1).
 \end{cases}$$ Thus $\hat{\alpha}$ does not have any strong neighborhood. Therefore $q$ does not have strong UPLP. Since strong UPLP is a necessary condition for $q$ to be a subsemicovering map, $q$ is not a subsemicovering map.

\end{ex}

By Theorems \ref{strong UPLP} and \ref{lazem}, the strong UPLP and the condition $(\bigstar)$ are two necessary conditions for a local homeomorphism to be a subsemicovering map.
It is natural to ask the relationship between these two necessary conditions.
The following example shows that the strong UPLP does not imply the condition $(\bigstar)$, even if $p_* (\pi_1 (\tx, \tilde{x}_0))$ is an open subgroup of $\pi_1^{qtop}(X, x_0)$.

\begin{ex}\label{ex2}
Let $ X = D^2 $ be the disk in $\RR^2$ and $\tx = \{(x,y)| 0 \leq y \leq \frac{1}{2} , x \in \RR\}$. Define $p:\tx \to X $ by
$$p(x,y) = \frac{1}{1+y} e^{2\pi x}.$$
It is routine to check that $p$ is a local homeomorphism with strong UPLP which does not have the condition $(\bigstar)$. Note that $p$ is not one-to-one and since $D^2$ is simply connected, $p_* (\pi_1 (\tx, \tilde{x}_0))$ is an open subgroup of $\pi_1^{qtop}(D^2) = \{1\}$.
\end{ex}
In the above example, $X$ is simply connected and $\tx$ is path connected but $p$ is not one-to-one since $p$ is a map without the condition $(\bigstar)$.
More precisely, if $p:(\tx, \tilde{x}_0) \to (X, x_0)$ is a map with the condition $(\bigstar)$ and $X$ is simply connected and $Y$ is path connected space, then $p$ is one-to-one.

The following example shows that the condition $(\bigstar)$ does not imply the strong UPLP.
\begin{ex}
Let $p: \tx \to X$ be the local homeomorphism introduced in Example \ref{st ex}. It is easy to check that $p$ is a local homeomorphism with the condition $(\bigstar)$ (see Example \ref{ex222}) and we recall that $p$ does not have strong UPLP (see Example \ref{st ex}).
\end{ex}

By extending the notions strong homotopy and the fundamental inverse category and monoid introduced by Steinberg \cite[Section 3]{sting} to semicovering maps, we give a necessary and sufficient condition for a subsemicovering map to be semicovering. Note that, the same necessary and sufficient condition for a subcovering map to be covering is not stated in \cite{sting}.
First, we recall the notion strong homotopy equivalence. Let $f: I \to X$  be a path and $f_t : I \to X$ given by $f_t(s) = f(ts )$, for every $t \in I$. It is convenient to think of $f_t$ as the prefix of $f$ of length $t$. Use the notation $\sim _h$ for the equivalence relation of being homotopic relative to base points. For two paths $f, g: (I, 0, 1) \to (X, x_0, y_0)$, $f$ is strongly homotopic to $g$, denoted by $f \sim _s g$, if  $f \sim _h g$ and for every $t \in I$ there exists $t' \in I$ such that $f_t \sim _h g_t'$, and vice versa.
As an example, any two reparametrizations of the same path are strongly homotopic.
If $f$ is a path in $X$, then we use $[f]_s$ to denote its strong homotopy class and $[f]$ to denote its homotopy class. Also, Steinberg defined a category $\mu_1(X)$ with involution. He showed that $\mu_1(X)$ is an inverse category (see \cite[Proposition 3.2]{sting}) and called $\mu_1(X)$ the fundamental inverse category of $X$. If $x \in X$, then the local inverse monoid at $x$ is denoted by $\mu_1(X, x) $ and it is called the fundamental inverse monoid of $X$ at $x$.

Steinberg \cite[Section 4]{sting} obtained the following results (Lemma \ref{kosh}, Lemma \ref{Path lifting} and Theorem \ref{lifting Subsemicovering}) for subcoverings of a semilocally simply connected space (see \cite[Lemma 4.1]{sting}, \cite[Lemma 4.7]{sting} and \cite[Theorem 4.8]{sting}).
Similarly, according to lifting criterion and homotopy lifting property for semicovering maps, we can state and prove the following results for an arbitrary subsemicovering map.

\begin{lem}\label{kosh} (\cite[Lemma 4.1]{sting}).
Let $p:(Y, y_0) \to (X, x_0)$ be a semicovering map and suppose $f_1, f_2:(I, 0, 1) \to (Y, y_0 , y_1)$ are paths such that $p([f_1]_s) \leq p([f_2]_s) $. Then $[f_1]_s \leq [f_2]_s$. In particular, if $p \circ f_1 \sim_s p \circ f_2,$ then $ f_1 \sim_s f_2. $
\end{lem}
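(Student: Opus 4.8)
The plan is to show that the map $p\colon\mu_1(Y)\to\mu_1(X)$ induced on fundamental inverse categories reflects the natural partial order. I would first establish the special ``in particular'' case, namely that $p\circ f_1\sim_s p\circ f_2$ forces $f_1\sim_s f_2$, and then bootstrap the general statement from it using the standard inverse-semigroup identity $a\le b\iff a=aa^{-1}b$. Throughout I use that, by Theorem \ref{semicovering map}, the semicovering $p$ is a local homeomorphism with UPLP and PLP, so that the path-homotopy theorem for local homeomorphisms, Theorem \ref{Homotopy}, and Corollary \ref{cor homotopy}, apply to $p$.

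For the special case, suppose $p\circ f_1\sim_s p\circ f_2$. Since $p\circ f_1\simeq p\circ f_2$ rel $\dot{I}$, and $f_1,f_2$ are liftings of these paths with common initial point $y_0$ and common terminal point $y_1$, Corollary \ref{cor homotopy} gives $f_1\simeq f_2$ rel $\dot{I}$, hence $f_1\sim_h f_2$. For the prefix condition, fix $t\in I$; by $p\circ f_1\sim_s p\circ f_2$ there is $t'\in I$ with $(p\circ f_1)_t\simeq(p\circ f_2)_{t'}$ rel $\dot{I}$. Here $(p\circ f_1)_t=p\circ(f_1)_t$ and $(p\circ f_2)_{t'}=p\circ(f_2)_{t'}$ are paths in $X$ issuing from $p(y_0)$, and being homotopic rel endpoints they have the same terminal point. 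Thus $(f_1)_t$ and $(f_2)_{t'}$ are liftings, based at $y_0$, of two paths of $X$ which are homotopic rel $\dot{I}$, so Corollary \ref{cor homotopy} yields $(f_1)_t\simeq(f_2)_{t'}$ rel $\dot{I}$; in particular $f_1(t)=f_2(t')$ and $(f_1)_t\sim_h(f_2)_{t'}$. Exchanging $f_1$ and $f_2$ gives the reverse prefix condition, whence $f_1\sim_s f_2$.

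For the general statement, put $\mu=p\circ f_1$ and $\nu=p\circ f_2$. The hypothesis $p([f_1]_s)\le p([f_2]_s)$ means $[\mu]_s\le[\nu]_s$ in $\mu_1(X)$, which (by the identity $a=aa^{-1}b$) is the same as $[\mu]_s=[\mu]_s[\mu]_s^{-1}[\nu]_s$, i.e.\ $p\circ f_1\sim_s(\mu\mu^{-1})\nu$. Form the path $g=(f_1 f_1^{-1})f_2$ in $Y$: it runs from $y_0$ to $y_1$, exactly as $f_1$ does, and up to reparametrization $p\circ g=(\mu\mu^{-1})\nu$, so $p\circ f_1\sim_s p\circ g$. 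By the special case already proved, $f_1\sim_s g$, that is $[f_1]_s=[g]_s=[f_1]_s[f_1]_s^{-1}[f_2]_s$. Since $[f_1]_s[f_1]_s^{-1}$ is an idempotent of $\mu_1(Y)$, this is precisely $[f_1]_s\le[f_2]_s$, as required. Here I use that reparametrizations are strongly homotopic, that the involution of $\mu_1$ is path reversal, and that $\mu_1(X)$ is an inverse category, all of which are in \cite[Section 3]{sting}.

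The delicate point is the prefix step in the second paragraph. Strong homotopy downstairs only tells us that the \emph{base} prefixes $(p\circ f_1)_t$ and $(p\circ f_2)_{t'}$ are homotopic; promoting this to a relation between the prefixes $(f_1)_t,(f_2)_{t'}$ upstairs requires lifting the homotopy between these prefixes, which is legitimate exactly because prefixes are themselves paths and Corollary \ref{cor homotopy} (resting on UPLP together with Theorem \ref{Homotopy}) applies to them, uniqueness of path lifting guaranteeing that the lifted terminal point is the expected one. This is precisely why strong homotopy, rather than ordinary homotopy, is the right equivalence to work with. The remainder is bookkeeping with the composition and endpoint conventions in $\mu_1$ and the elementary inverse-semigroup facts quoted above.
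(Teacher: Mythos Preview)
Your proof is correct and is precisely the adaptation of Steinberg's argument that the paper points to: the paper does not spell out a proof of this lemma but simply cites \cite[Lemma~4.1]{sting} and remarks that the same reasoning goes through once one uses the homotopy lifting property for semicoverings (Corollary~\ref{cor homotopy}), which is exactly what you invoke for both the global and the prefix homotopies. Your reduction of the general inequality to the strong-equivalence case via the inverse-semigroup identity $a\le b\iff a=aa^{-1}b$ is a clean and standard way to package the inverse-category bookkeeping.
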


\begin{lem}\label{Path lifting} (\cite[Lemma 4.7]{sting})(Path Lifting Property for Subsemicovering).
\\ Let $p :(\tx, \tilde{x}_0) \to (X, x_0)$ be a subsemicovering map and $f: (I, 0) \to (X, x)$ be a path. Then there exists $ \tilde{f}: (I, 0)\to (\tx , \tilde{x}_0)$ such that $p \circ \tilde{f} = f $ if and only if $ [f \bar{f}]_s \in p_* (\mu_1(\tx, \tilde{x}_0))$. Moreover, $\tilde{f}$ is unique when it exists.
\end{lem}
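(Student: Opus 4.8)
The plan is to pull the problem back to a semicovering space, where every path lifts uniquely, and then use Lemma~\ref{kosh}. Since $p$ is subsemicovering, fix a semicovering map $q:(\ty,\tilde{y}_0)\to(X,x_0)$ and an embedding $\varphi:(\tx,\tilde{x}_0)\hookrightarrow(\ty,\tilde{y}_0)$ with $q\circ\varphi=p$. By Theorem~\ref{semicovering map}, $q$ has PLP and UPLP, so $f$ admits a unique lift $\hat f:(I,0)\to(\ty,\tilde{y}_0)$ with $q\circ\hat f=f$. The first reduction is the remark that $f$ has a lift $\tilde f$ through $p$ with $\tilde f(0)=\tilde{x}_0$ if and only if $\hat f(I)\subseteq\varphi(\tx)$, in which case necessarily $\tilde f=\varphi^{-1}\circ\hat f$; this already yields the uniqueness clause, because any such $\tilde f$ makes $\varphi\circ\tilde f$ a lift of $f$ through $q$, hence $\varphi\circ\tilde f=\hat f$.

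For necessity, if $\tilde f$ lifts $f$ with $\tilde f(0)=\tilde{x}_0$, then $\tilde f\,\overline{\tilde f}$ is a loop at $\tilde{x}_0$ in $\tx$ and $p\circ(\tilde f\,\overline{\tilde f})=f\bar f$, so $[f\bar f]_s=p_*\bigl([\tilde f\,\overline{\tilde f}]_s\bigr)\in p_*(\mu_1(\tx,\tilde{x}_0))$. For sufficiency, suppose $[f\bar f]_s\in p_*(\mu_1(\tx,\tilde{x}_0))$. Since $\mu_1(\tx,\tilde{x}_0)$ consists of strong homotopy classes of loops at $\tilde{x}_0$, there is a loop $\ell$ at $\tilde{x}_0$ in $\tx$ with $p\circ\ell\sim_s f\bar f$. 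Then $\varphi\circ\ell$ and $\hat f\,\overline{\hat f}$ are both loops at $\tilde{y}_0$ in $\ty$ with $q\circ(\varphi\circ\ell)=p\circ\ell\sim_s f\bar f=q\circ(\hat f\,\overline{\hat f})$, so Lemma~\ref{kosh} applied to the semicovering $q$ gives $\varphi\circ\ell\sim_s\hat f\,\overline{\hat f}$.

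The key point is then that strongly homotopic paths have the same image: the defining prefix homotopies are rel endpoints, so $g_t\sim_h h_{t'}$ forces $g(t)=g_t(1)=h_{t'}(1)=h(t')$, and symmetrically, whence $g(I)=h(I)$. Applying this, $\varphi(\ell(I))=(\varphi\circ\ell)(I)=(\hat f\,\overline{\hat f})(I)=\hat f(I)$, so $\hat f(I)\subseteq\varphi(\tx)$ and $\tilde f:=\varphi^{-1}\circ\hat f$ is a well-defined continuous lift of $f$ (continuity because $\varphi$ is an embedding), with $\tilde f(0)=\tilde{x}_0$ and $p\circ\tilde f=q\circ\hat f=f$, as required.

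I expect the main obstacle to be the sufficiency step, specifically arranging the comparison so that Lemma~\ref{kosh} applies verbatim: one must pass from the given loop $\ell$ in $\tx$ to a genuinely comparable pair of loops based at $\tilde{y}_0$ in $\ty$, which is exactly why the statement is phrased with $f\bar f$ (a loop at the base point) rather than with $f$; and then one must notice that equality of images of strongly homotopic paths is precisely the bridge that converts the relation $\sim_s$ in $\ty$ into the set-theoretic inclusion $\hat f(I)\subseteq\varphi(\tx)$ needed to define $\tilde f$. Everything else — existence of $q$ and $\varphi$, unique lifting through the semicovering $q$, and continuity of $\varphi^{-1}\circ\hat f$ — is routine.
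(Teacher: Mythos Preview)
Your argument is correct and is precisely the adaptation of Steinberg's proof that the paper alludes to (the paper itself gives no proof, only the remark that Lemmas~\ref{kosh}, \ref{Path lifting} and Theorem~\ref{lifting Subsemicovering} carry over from \cite[Lemmas 4.1, 4.7, Theorem 4.8]{sting} once one replaces covering by semicovering and uses Theorems~\ref{Homotopy}--\ref{semicovering map}). Your reduction to the unique $q$-lift $\hat f$, the use of Lemma~\ref{kosh} to obtain $\varphi\circ\ell\sim_s \hat f\,\overline{\hat f}$, and the observation that strongly homotopic paths have identical images are exactly the ingredients of Steinberg's proof; nothing is missing and nothing is extra.
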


\begin{thm}\label{lifting Subsemicovering} (\cite[Theorem 4.8]{sting})(Lifting Criterion Theorem for Subsemicovering).
\\ Let $p :(\tx, \tilde{x}_0) \to (X, x_0)$ be a subsemicovering map and $g : (Z, z) \to (X, x)$ be a continuous mapping with $Z$ locally path connected. Then there is a lift $ \tilde{g} :(Z, z) \to (\tx, \tilde{x}_0)$ of $g$ if and only if $g_*(\mu_1(Z, z)) \subseteq p_* (\mu_1(\tx, \tilde{x}_0))$. Moreover, $\tilde{g}$ is unique.
\end{thm}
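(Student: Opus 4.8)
The plan is to lift $g$ through the semicovering that witnesses $p$ being subsemicovering, and then show that this lift automatically takes values in the embedded copy of $\tx$; the strong‑homotopy machinery of Lemma~\ref{Path lifting} enters only at that last step. Throughout, note that the hypothesis already forces $g(z)=x_0$. For necessity, if $\tilde g$ is a lift of $g$ then $g=p\circ\tilde g$ induces $g_*=p_*\circ\tilde g_*$ on fundamental inverse monoids (functoriality of $\mu_1$, \cite[Section~3]{sting}), whence $g_*(\mu_1(Z,z))=p_*\big(\tilde g_*(\mu_1(Z,z))\big)\subseteq p_*(\mu_1(\tx,\tilde{x}_0))$.

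For sufficiency I would first reduce the $\mu_1$‑hypothesis to a $\pi_1$‑hypothesis. We may assume $Z$ connected, hence path connected. Choose a semicovering $q:(\tilde Y,\tilde{y}_0)\to(X,x_0)$ and an embedding $\varphi:(\tx,\tilde{x}_0)\hookrightarrow(\tilde Y,\tilde{y}_0)$ with $q\circ\varphi=p$. There is a natural surjection $\mu_1(W,w)\twoheadrightarrow\pi_1(W,w)$, $[f]_s\mapsto[f]$, compatible with maps induced by continuous functions; applying it to $g_*(\mu_1(Z,z))\subseteq p_*(\mu_1(\tx,\tilde{x}_0))$ gives
\[
g_*(\pi_1(Z,z))\subseteq p_*(\pi_1(\tx,\tilde{x}_0))=q_*\varphi_*(\pi_1(\tx,\tilde{x}_0))\subseteq q_*(\pi_1(\tilde Y,\tilde{y}_0)).
\]
By the lifting criterion for semicoverings (Theorem~\ref{criterion}) there is a unique continuous $\hat g:(Z,z)\to(\tilde Y,\tilde{y}_0)$ with $q\circ\hat g=g$.

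Next I would show $\hat g(Z)\subseteq\varphi(\tx)$ and set $\tilde g:=\varphi^{-1}\circ\hat g$. Fix $w\in Z$ and a path $\gamma$ in $Z$ from $z$ to $w$. Since $[\gamma\bar\gamma]_s\in\mu_1(Z,z)$,
\[
[(g\gamma)\overline{(g\gamma)}]_s=g_*([\gamma\bar\gamma]_s)\in g_*(\mu_1(Z,z))\subseteq p_*(\mu_1(\tx,\tilde{x}_0)),
\]
so by Lemma~\ref{Path lifting} the path $g\circ\gamma$ has a unique lift $\widetilde{g\gamma}:(I,0)\to(\tx,\tilde{x}_0)$. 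Then $\varphi\circ\widetilde{g\gamma}$ and $\hat g\circ\gamma$ are both $q$‑lifts of $g\circ\gamma$ starting at $\tilde{y}_0$, hence equal by unique path lifting for the semicovering $q$; evaluating at $1$, $\hat g(w)=\varphi\big(\widetilde{g\gamma}(1)\big)\in\varphi(\tx)$. Since $\varphi$ is an embedding, $\tilde g=\varphi^{-1}\circ\hat g$ is continuous, and $p\circ\tilde g=q\circ\varphi\circ\tilde g=q\circ\hat g=g$ with $\tilde g(z)=\tilde{x}_0$. Uniqueness of $\tilde g$ is then immediate: two such lifts composed with any path $\gamma$ from $z$ to $w$ give two $p$‑lifts of $g\circ\gamma$ from $\tilde{x}_0$, which agree since $p$ has strong UPLP (Theorem~\ref{strong UPLP}) and hence UPLP (Lemma~\ref{SUPLP}).

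The main obstacle is the bookkeeping connecting $\mu_1$ and $\pi_1$: one must check that $[f]_s\mapsto[f]$ is a well‑defined, surjective, natural homomorphism and that the image of $p_*(\mu_1(\tx,\tilde{x}_0))$ under it is exactly $p_*(\pi_1(\tx,\tilde{x}_0))$ — this is what legitimizes passing to the $\pi_1$‑hypothesis needed by Theorem~\ref{criterion} — together with the core point that the $\mu_1$‑inclusion is precisely strong enough, via Lemma~\ref{Path lifting}, to pin each value $\hat g(w)$ inside $\varphi(\tx)$. Alternatively, one can avoid the embedding and argue as Steinberg does: construct $\tilde g$ pointwise by lifting $g\circ\gamma$ (Lemma~\ref{Path lifting}), prove well‑definedness by showing the lift of $g\circ(\gamma\bar\delta)$ is closed (which needs strong homotopy of base paths to lift, again via the semicovering extension and Lemma~\ref{kosh}), and verify continuity using local path connectedness of $Z$ and a local‑homeomorphism neighbourhood of $\tilde g(w)$ as in the proof of Theorem~\ref{asli}.
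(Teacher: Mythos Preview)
Your argument is correct. Note, however, that the paper does not actually supply a proof: it only asserts that Steinberg's proof of \cite[Theorem~4.8]{sting} for subcoverings over semilocally simply connected bases goes through unchanged once one substitutes the semicovering lifting criterion (Theorem~\ref{criterion}) and the semicovering homotopy lifting (Corollary~\ref{cor homotopy}) for their classical covering counterparts. Steinberg's argument is precisely the alternative you sketch in your final paragraph: define $\tilde g(w)$ pointwise as the endpoint of the $p$-lift of $g\circ\gamma$ furnished by Lemma~\ref{Path lifting}, check well-definedness via condition~$(\bigstar)$/Lemma~\ref{kosh}, and verify continuity by hand using local path connectedness of $Z$ and a local-homeomorphism chart for $p$.

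Your primary route is genuinely different and, in one respect, slicker: by first lifting $g$ globally to the ambient semicovering $\tilde Y$ via Theorem~\ref{criterion} and only then invoking Lemma~\ref{Path lifting} to force $\hat g(Z)\subseteq\varphi(\tx)$, you get continuity of $\tilde g=\varphi^{-1}\circ\hat g$ for free, whereas Steinberg (and hence the paper) must establish continuity from scratch. The cost is the extra bookkeeping you flag --- the natural surjection $\mu_1\twoheadrightarrow\pi_1$ and the reduction to a $\pi_1$-inclusion --- together with the need to assume $Z$ connected so that Theorem~\ref{criterion} applies; Steinberg's direct construction needs neither, since Lemma~\ref{Path lifting} already works path by path and connectedness plays no role beyond the basepoint component.
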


Using Lemmas \ref{kosh}, \ref{Path lifting} and Theorem \ref{lifting Subsemicovering}, we give a necessary and sufficient condition for a subsemicovering map to be semicovering. Note that there is no similar result for subcovering maps in \cite{sting}.
\begin{thm}\label{pp}
Let $p :(\tx, \tilde{x}_0) \to (X, x_0)$ be a subsemicovering map. Then $p$ is a semicovering map if and only if $ [f \bar{f}]_s \in p_* (\mu_1(\tx, \tilde{x}_0))$ for an arbitrary path $f$ in $X$.
\end{thm}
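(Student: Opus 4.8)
\section*{Proof proposal}

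The plan is to read the equivalence through the characterisation of semicovering maps in Theorem~\ref{semicovering map}: a map is semicovering exactly when it is a local homeomorphism with UPLP and PLP. The point to exploit is that, for a subsemicovering map $p$, the only one of these three properties that can fail is PLP; indeed $p$ is automatically a local homeomorphism by Theorem~\ref{lazem}, and it has strong UPLP by Theorem~\ref{strong UPLP}, hence UPLP by Lemma~\ref{SUPLP}. So the whole statement reduces to: \emph{$p$ has PLP if and only if $[f\bar f]_s\in p_*(\mu_1(\tx,\tilde x_0))$ for every path $f$ in $X$ with $f(0)=x_0$.}

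For the forward implication, assume $p$ is a semicovering map. By Theorem~\ref{semicovering map} it has PLP, so a path $f$ in $X$ with $f(0)=x_0$ admits a lift $\tilde f:(I,0)\to(\tx,\tilde x_0)$. Then $\tilde f\bar{\tilde f}$ is a loop at $\tilde x_0$ and $p\circ(\tilde f\bar{\tilde f})=f\bar f$, so $[f\bar f]_s=p_*([\tilde f\bar{\tilde f}]_s)\in p_*(\mu_1(\tx,\tilde x_0))$. This is the easy direction.

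For the converse, suppose $[f\bar f]_s\in p_*(\mu_1(\tx,\tilde x_0))$ for every path $f$ in $X$ based at $x_0$. By the reduction above it suffices to verify PLP. The hypothesis together with Lemma~\ref{Path lifting} (Path Lifting Property for Subsemicovering) shows that every such $f$ has a (unique) lift starting at $\tilde x_0$. To upgrade this to PLP at an arbitrary point, let $\tilde x_1\in\tx$ and let $f$ be a path with $f(0)=p(\tilde x_1)$. Since $\tx$ is path connected, choose a path $\gamma$ in $\tx$ from $\tilde x_0$ to $\tilde x_1$ and set $g=(p\circ\gamma)\ast f$, a path in $X$ based at $x_0$; by the previous step $g$ has a lift $\tilde g$ starting at $\tilde x_0$. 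The reparametrised first half of $\tilde g$ is a lift of $p\circ\gamma$ starting at $\tilde x_0$, so by UPLP it coincides with $\gamma$; hence $\tilde g$ passes through $\tilde x_1$ at the midpoint, and the reparametrised second half of $\tilde g$ is a lift of $f$ starting at $\tilde x_1$. Thus $p$ has PLP, and by Theorem~\ref{semicovering map} it is a semicovering map.

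The substantive content is already contained in Lemma~\ref{SUPLP}, Lemma~\ref{Path lifting} and Theorems~\ref{strong UPLP}, \ref{lazem}, \ref{semicovering map}; the only genuinely new observations are that PLP is the sole obstruction to a subsemicovering map being semicovering, and the routine path-connectedness-plus-UPLP argument promoting lifting of paths based at $x_0$ to lifting of all paths. I expect the only places needing care to be the bookkeeping around strong homotopy classes versus ordinary homotopy classes when invoking Lemma~\ref{Path lifting}, and the reparametrisations involved in the concatenation $g=(p\circ\gamma)\ast f$.
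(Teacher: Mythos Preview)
Your proof is correct and follows the same overall strategy as the paper: reduce via Theorem~\ref{semicovering map} to checking PLP and UPLP, use Lemma~\ref{Path lifting} for PLP, and handle UPLP separately. The one notable difference is in how UPLP is established. The paper argues directly from the definition of subsemicovering: if $\varphi:\tx\hookrightarrow\ty$ is the embedding into a semicovering $q$ with $q\circ\varphi=p$, then two $p$-lifts of a path with the same initial point give, via $\varphi$, two $q$-lifts with the same initial point, hence coincide since $q$ has UPLP and $\varphi$ is injective. You instead detour through Theorem~\ref{strong UPLP} and Lemma~\ref{SUPLP}, which is correct but considerably less direct. Conversely, your explicit promotion of path lifting at $\tilde x_0$ to path lifting at an arbitrary $\tilde x_1$ via concatenation with a connecting path is a point the paper's proof passes over in silence when it says ``we can conclude that $p$ has PLP by Theorem~\ref{Path lifting}''; your version is the more careful one there.
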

\begin{proof}
Suppose $p$ is a semicovering map and $f: (I, 0) \to (X, x)$ is an arbitrary path in $X$. Since $p$ has PLP, there exists $ \tilde{f}: (I, 0)\to (\tx , \tilde{x}_0)$ such that $p \circ \tilde{f} = f $. Hence $ \tilde{f} \in \mu_1(\tx, \tilde{x}_0)$ and so $ [f \bar{f}]_s \in p_* (\mu_1(\tx, \tilde{x}_0))$.

 Conversely, let $p:\tx \to X $ be a subsemicovering map, then there exists a semicovering map $ q: \tilde{Y} \to X $ with an embedding map $\varphi: \tx \to  \tilde{Y}$ such that $q \circ \varphi = p $. It is enough to show that $p$ has PLP and UPLP (see Theorem \ref{semicovering map}). We can conclude that $p$ has PLP by Theorem \ref{Path lifting}. Since $q$ has UPLP and $p$ can be extended to $q$, the map $p$ has UPLP. Hence $p$ is a semicovering map.
\end {proof}
The following corollary is an immediate consequence of Theorem \ref{pp}.
\begin{cor}\label{c ppp}
Let $p:(\tilde{X}, \tilde{x}_0) \to (X, x_0) $ be a subsemicovering map and $X$ be a space such that every semicover of $X$ is a cover. Then $p$ is a covering map if and only if $ [f \bar{f}]_s \in p_* (\mu_1(\tx, \tilde{x}_0))$ for every path $f: (I, 0) \to (X, x)$.
\end{cor}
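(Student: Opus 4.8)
The plan is to read this off directly from Theorem \ref{pp} together with the standing hypothesis on $X$. Recall that Theorem \ref{pp} already characterizes when a subsemicovering map $p$ is a \emph{semicovering} map: it happens exactly when $[f\bar f]_s \in p_*(\mu_1(\tx,\tilde x_0))$ for every path $f$ in $X$. So the entire task reduces to upgrading ``semicovering'' to ``covering'' by invoking the assumption that every semicover of $X$ is a cover; no new lifting-theoretic work is needed.

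First I would dispatch the forward direction. If $p$ is a covering map, then in particular it is a semicovering map (this is recorded just after Definition \ref{subsemicovering}), so $p$ is a subsemicovering map that is in fact semicovering; Theorem \ref{pp} then immediately gives $[f\bar f]_s \in p_*(\mu_1(\tx,\tilde x_0))$ for every path $f:(I,0)\to(X,x)$. For the converse, assume this membership condition holds for every such $f$. Applying Theorem \ref{pp} (which is available since $p$ is a subsemicovering map by hypothesis), we conclude that $p$ is a semicovering map with codomain $X$. Since by assumption every semicover of $X$ is a cover, $p$ is a covering map, which finishes the argument.

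There is no genuine obstacle here, as the corollary is a direct specialization of Theorem \ref{pp}; the only care needed is bookkeeping. I would be explicit that ``every semicover of $X$ is a cover'' is to be read as ``every semicovering map onto $X$ is a covering map,'' so that it applies to $p$ itself once $p$ is known to be semicovering, and I would note that the blanket hypotheses on $\tx$ (path connected and locally path connected, carried along from the definition of a subsemicovering map) are precisely what is required for $p$ to qualify as a covering map in the sense used above.
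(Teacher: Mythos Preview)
Your proposal is correct and matches the paper's approach: the paper records this corollary as ``an immediate consequence of Theorem \ref{pp}'' with no further proof, which is precisely the reduction you carry out (Theorem \ref{pp} gives the equivalence with semicovering, and the hypothesis on $X$ upgrades semicovering to covering).
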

If $X$ is a semilocally simply connected space then every semicover of $X$ is a cover (see \cite[Corollary 7.2]{B1}).
\begin{cor}\label{c pppp}
Let $p:(\tilde{X}, \tilde{x}_0) \to (X, x_0) $ be a subcovering map and $X$ be a semilocally simply connected space. Then $p$ is a covering map if and only if $ [f \bar{f}]_s \in p_* (\mu_1(\tx, \tilde{x}_0))$ for every path $f: (I, 0) \to (X, x)$.
\end{cor}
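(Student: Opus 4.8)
The plan is to obtain this corollary as an immediate specialization of Corollary \ref{c ppp}. Since every covering map is a semicovering map, the subcovering map $p$ is in particular a subsemicovering map, so it falls under the scope of Corollary \ref{c ppp}; and by the fact recalled just above (\cite[Corollary 7.2]{B1}), a semilocally simply connected space $X$ has the property that every semicover of $X$ is a cover. Feeding these two observations into Corollary \ref{c ppp} yields precisely the stated equivalence, with no further work required.

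Alternatively, I would spell out the short two-step argument that underlies Theorem \ref{pp}. For necessity, suppose $p$ is a covering map; then $p$ has PLP, so an arbitrary path $f:(I,0)\to(X,x_0)$ lifts to $\tilde f:(I,0)\to(\tx,\tilde{x}_0)$ with $p\circ\tilde f=f$, hence $\tilde f\in\mu_1(\tx,\tilde{x}_0)$ and, applying $p_*$ to $[\tilde f\bar{\tilde f}]_s$, we get $[f\bar f]_s\in p_*(\mu_1(\tx,\tilde{x}_0))$. For sufficiency, assume $[f\bar f]_s\in p_*(\mu_1(\tx,\tilde{x}_0))$ for every path $f$ in $X$. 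By Theorem \ref{semicovering map} it suffices to check that $p$ has PLP and UPLP: PLP follows from Lemma \ref{Path lifting} (the Path Lifting Property for subsemicovering maps, which applies since a subcovering map is a subsemicovering map), while UPLP follows because $p$ extends through an embedding to a covering map $q:\ty\to X$ which has UPLP. Thus $p$ is a semicovering map, and since every semicover of the semilocally simply connected space $X$ is a cover, $p$ is a covering map.

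I do not anticipate a genuine obstacle: the substantive content already resides in Lemma \ref{Path lifting}, Theorem \ref{semicovering map}, and Theorem \ref{pp}/Corollary \ref{c ppp}, so this corollary is purely a matter of assembling them. The only point requiring a line of care is the necessity direction, where one must verify that a path lift $\tilde f$ of $f$ really produces a strong homotopy class $[\tilde f\bar{\tilde f}]_s$ in the fundamental inverse monoid $\mu_1(\tx,\tilde{x}_0)$ that $p_*$ maps onto $[f\bar f]_s$; this is immediate from the definition of strong homotopy and from functoriality of $\mu_1$, but it is the one place where the inverse-monoid formalism, rather than the ordinary fundamental group, is actually being used.
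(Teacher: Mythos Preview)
Your proposal is correct and matches the paper's approach: the paper gives no explicit proof of this corollary, presenting it immediately after the sentence recalling that every semicover of a semilocally simply connected space is a cover, so that it is understood as the specialization of Corollary~\ref{c ppp} you describe in your first paragraph. Your alternative spelled-out argument via Theorem~\ref{pp} is also fine and just unpacks the same reasoning.
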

The following corollary is a consequence of Theorem \ref{pp} and \cite[Theorem 4.2]{mt2}.
\begin{cor}\label{c pp}
Suppose $p:(\tilde{X}, \tilde{x}_0) \to (X, x_0) $ is a subsemicovering map and $X$ is locally path connected, such that $[\pi_1(X,x_0): p_*(\pi_1(\tilde{X},\tilde{x}_0))]$ is finite. If $ [f \bar{f}]_s \in p_* (\mu_1(\tx, \tilde{x}_0))$ for every path $f: (I, 0) \to (X, x)$, then $p$ is a covering map.
\end{cor}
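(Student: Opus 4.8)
The plan is to obtain this corollary as a two-step upgrade of $p$: first from subsemicovering to semicovering via Theorem \ref{pp}, and then from semicovering to covering via the finite-index hypothesis together with \cite[Theorem 4.2]{mt2}.

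First I would note that the standing hypothesis ``$[f\bar{f}]_s \in p_*(\mu_1(\tx,\tilde{x}_0))$ for every path $f\colon (I,0)\to (X,x)$'' is exactly the condition appearing in the statement of Theorem \ref{pp}. Since $p$ is assumed to be a subsemicovering map, Theorem \ref{pp} applies directly and tells us that $p$ is in fact a semicovering map. No extra argument is needed at this stage; it is a verbatim application.

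Next, $p\colon(\tx,\tilde{x}_0)\to(X,x_0)$ is now a semicovering map, $X$ is locally path connected by hypothesis, and $[\pi_1(X,x_0):p_*(\pi_1(\tx,\tilde{x}_0))]$ is finite. I would then invoke \cite[Theorem 4.2]{mt2}, which asserts that a semicovering map of finite index over a locally path connected space is a covering map. Applying it yields that $p$ is a covering map, which is the conclusion.

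The only point requiring care is bookkeeping: one must check that the hypotheses assembled here are precisely those demanded by \cite[Theorem 4.2]{mt2} — in particular that finiteness of the index $[\pi_1(X,x_0):p_*(\pi_1(\tx,\tilde{x}_0))]$ is the correct finiteness condition (equivalently, finiteness of the fibers of the associated semicovering) under which a semicovering becomes a covering, and that local path connectedness of $X$ is the ambient assumption needed. Beyond correctly matching and citing these two results, there is no genuine obstacle: all the homotopy-theoretic content has already been packaged into Theorem \ref{pp} and \cite[Theorem 4.2]{mt2}.
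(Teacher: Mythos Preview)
Your proposal is correct and matches the paper's own approach: the paper simply states that this corollary is a consequence of Theorem \ref{pp} and \cite[Theorem 4.2]{mt2}, which is exactly the two-step argument you outline. There is nothing to add.
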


\end{document}